\newcommand{\e}{e}
\newcommand{\eps}{\varepsilon}
\def\eps{\varepsilon}
\def\IN{\mathbb{N}}
\newtheorem{thm}{Theorem}
\newtheorem{corollary}{Corollary}
\newtheorem{lemma}{Lemma}
\newtheorem{pro}{Proposition}
\newtheorem*{algorithm}{Algorithm}
\theoremstyle{definition}
\theoremstyle{remark}
\newtheorem{remark}{Remark}
\newtheorem{xmpl}{Example}
\newtheorem{fact}{Fact}
\title[Singular value decomposition of integration operators]{On the singular value decomposition of n-fold integration operators}
\author{Ronny Ramlau}
\address{Institute for Industrial Mathematics, Johannes Kepler University Linz, Austria, and Johann Radon Institute for Computational and Applied Mathematics (RICAM),
  Austrian Academy of Sciences,
  Altenberger Stra\ss e 69, 4040 Linz, Austria}
\email{ronny.ramlau@jku.at}
\author{Christoph Koutschan}
\address{Johann Radon Institute for Computational and Applied Mathematics (RICAM),
  Austrian Academy of Sciences,
  Altenberger Stra\ss e 69, 4040 Linz, Austria}
\email{christoph.koutschan@ricam.oeaw.ac.at}
\author{Bernd Hofmann}
\address{Department of Mathematics, Chemnitz University of Technology,
  09107 Chemnitz,  Germany}
\email{bernd.hofmann@mathematik.tu-chemnitz.de}
\date{}
\begin{document}
\begin{abstract}
In theory and practice of inverse problems, linear operator equations $Tx=y$ with compact linear forward operators $T$ having a non-closed range $\mathcal{R}(T)$ and mapping between infinite dimensional Hilbert spaces plays some prominent role. As a consequence of the ill-posedness
of such problems, regularization approaches are required, and due to its unlimited qualification spectral cut-off is an appropriate method for the stable approximate solution of corresponding inverse problems.
For this method, however, the singular system  $\{\sigma_i(T),u_i(T),v_i(T)\}_{i=1}^\infty$ of the compact operator $T$ is needed, at least for $i=1,2,...,N$, up to some stopping index $N$. In this note we
consider $n$-fold integration operators $T=J^n\;(n=1,2,...)$ in $L^2([0,1])$ occurring in numerous applications, where the solution of the associated operator equation is characterized by the $n$-th generalized derivative $x=y^{(n)}$ of the Sobolev space function $y \in H^n([0,1])$. Almost all textbooks on linear inverse problems present the whole singular system $\{\sigma_i(J^1),u_i(J^1),v_i(J^1)\}_{i=1}^\infty$ in an explicit manner. However, they do not discuss the singular systems for $J^n,\;n \ge 2$. We will emphasize that this seems to be a consequence of the fact that for higher $n$ the eigenvalues $\sigma^2_i(J^n)$ of the associated ODE boundary
value problems obey transcendental equations, the complexity of which is growing with $n$. We present the transcendental equations for $n=2,3,...$ and discuss and illustrate the associated eigenfunctions and some of their properties.
\end{abstract}

\maketitle

\section{Introduction}
\label{se1}

For the stable approximate solution of the ill-posed linear operator equation
\begin{equation}\label{eq:opeq}
Tx=y, \qquad y \in \mathcal{R}(T),
\end{equation}
with a compact linear operator $T$ mapping between the infinite dimensional Hilbert spaces $X$ and $Y$ with norms $\|\cdot\|$ and inner products $\langle \cdot,\cdot\rangle$ and range $\mathcal{R}(T) \not= \overline{\mathcal{R}(T)}=Y$ the {\sl spectral cut-off} method is appropriate due to
its unlimited qualification which avoids saturation of the method (cf.,~e.g.,~\cite[Example~4]{Mathe04}). However, the use of spectral cut-off requires the knowledge of the singular system  $\{\sigma_i(T),u_i(T),v_i(T)\}_{i=1}^\infty$ of the compact operator $T$, at least for $i=1,2,...,N$, up to some stopping index $N$, which plays the role of a regularization parameter and occurs in case of noisy data $y^\delta \in Y$ obeying the noise model $\|y-y^\delta\|\le \delta$  with noise level $\delta>0$ inside the regularization procedure
$$x_N^\delta := \sum \limits_{i=1}^N \frac{1}{\sigma_i(T)}\langle y^\delta,v_i(T)\rangle u_i(T) $$
(cf.,~e.g.,~\cite[p.36]{Kirsch11}).

In this note, we restrict our considerations with respect to equation (\ref{eq:opeq}) to the family of {\sl Riemann-Liouville fractional integral operators} $T:=J^\alpha$ defined for all exponents $0<\alpha<\infty$ as {\sl compact} operators
\begin{equation} \label{eq:family}
[J^\alpha x](s):=\int \limits_0^s \frac{(s-t)^{\alpha-1}}{\Gamma(\alpha)} \,x(t) \,dt, \qquad 0 \le s \le 1,
\end{equation}
mapping in the separable infinite dimensional Hilbert space $X=Y:=L^2([0,1])$ of quadratic integrable Lebesgue-measurable real functions over the unit interval $[0,1]$,
which are of particular interest in the mathematical literature. Namely the linear operator equation
\begin{equation} \label{eq:opeq}
J^\alpha x =y, \qquad y \in \mathcal{R}(J^\alpha),
\end{equation}
is solved in a unique manner by the simple formula
$$x= D^\alpha y$$
whenever the right-hand side $y \in L^2([0,1])$ belongs to the range $\mathcal{R}(J^\alpha)$ of the operator $J^\alpha$.
Here, $D^\alpha$ designates the $\alpha$-fold fractional derivative of the function $y$.

The following facts are well-known from the literature (see, for example, \cite{Engl97,GoVe91,Kress89}):

\begin{fact} \label{fact:1}
For all real numbers $0 <\alpha <\infty$ the linear convolution operators $J^\alpha$ mapping in $L^2([0,1])$ are {\sl injective} and {\sl compact},
and so are the adjoint operators
\begin{equation} \label{eq:adjoint}
[(J^\alpha)^* z](t):=\int \limits_t^1 \frac{(s-t)^{\alpha-1}}{\Gamma(\alpha)} z(s)\, ds, \qquad 0 \le t \le 1,
\end{equation}
too. Hence, the range
$\mathcal{R}(J^\alpha)$ is a {\sl dense} and {\sl non-closed} subset of  $L^2([0,1])$. Consequently, the operator equation (\ref{eq:opeq}) is {\sl ill-posed of type~II} in the sense of Nashed \cite{Nashed86}.
The linear Volterra integral operators $J^\alpha$ are linear Fredholm integral operators with quadratically integrable kernel and hence Hilbert-Schmidt operators
whenever $\frac{1}{2}<\alpha<\infty$.
\end{fact}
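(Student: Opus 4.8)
The plan is to establish the four assertions of Fact~\ref{fact:1} in the order: boundedness and compactness of $J^\alpha$; injectivity; the behaviour of the adjoint, from which density and non-closedness of $\mathcal{R}(J^\alpha)$ follow; and finally the Hilbert--Schmidt property for $\alpha>\tfrac12$. Throughout I would exploit that $J^\alpha$ is a convolution, $[J^\alpha x](s)=(g_\alpha * x)(s)$ with $g_\alpha(s)=s^{\alpha-1}/\Gamma(\alpha)$ restricted to $[0,1]$, together with the semigroup identity $J^\beta J^\alpha = J^{\alpha+\beta}$.

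For boundedness, note that $g_\alpha\in L^1([0,1])$ because $\int_0^1 s^{\alpha-1}\,ds = 1/\alpha<\infty$ for every $\alpha>0$; Young's convolution inequality then gives $\|J^\alpha x\|\le \|g_\alpha\|_{L^1}\,\|x\|$, so $J^\alpha$ is bounded. The \emph{main obstacle} is compactness in the regime $0<\alpha\le\tfrac12$, where the kernel fails to be square-integrable and the direct Hilbert--Schmidt argument breaks down. I would overcome this by a truncation-and-approximation argument: for $\epsilon>0$ let $k_\epsilon(s,t)$ equal the kernel $k(s,t)=(s-t)^{\alpha-1}/\Gamma(\alpha)\,\mathbf 1_{\{t<s\}}$ when $s-t\ge\epsilon$ and zero otherwise. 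Then $k_\epsilon$ is bounded on $[0,1]^2$, hence square-integrable, so the associated operator $J^\alpha_\epsilon$ is Hilbert--Schmidt and therefore compact. The difference $J^\alpha-J^\alpha_\epsilon$ is convolution with $g_\alpha\,\mathbf 1_{[0,\epsilon)}$, whose $L^1$-norm equals $\epsilon^\alpha/\Gamma(\alpha+1)$; Young's inequality bounds the operator norm of the difference by this quantity, which tends to $0$ as $\epsilon\to 0$. Thus $J^\alpha$ is a norm-limit of compact operators and so compact, for \emph{every} $\alpha>0$.

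Injectivity I would derive from the semigroup property: if $J^\alpha x=0$, applying $J^{\lceil\alpha\rceil-\alpha}$ yields $J^{n}x=0$ with $n:=\lceil\alpha\rceil$ a positive integer. Since $J^{n}x$ is the $n$-fold iterated integral of $x$ and satisfies $(d/ds)^{n}J^{n}x=x$ almost everywhere, its vanishing forces $x=0$. For the adjoint $(J^\alpha)^*$ in~(\ref{eq:adjoint}), I would observe that $(J^\alpha)^*=RJ^\alpha R$ where $(Rf)(s)=f(1-s)$ is a unitary involution on $L^2([0,1])$; compactness of $(J^\alpha)^*$ is then automatic (being the adjoint of a compact operator), and injectivity of $(J^\alpha)^*$ follows from that of $J^\alpha$ by unitary conjugation. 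Density of the range is the standard orthogonality relation $\overline{\mathcal{R}(J^\alpha)}=\mathcal{N}((J^\alpha)^*)^\perp=\{0\}^\perp=L^2([0,1])$.

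Non-closedness I would obtain by contradiction: if $\mathcal{R}(J^\alpha)$ were closed it would coincide with its dense closure $L^2([0,1])$, making the compact operator $J^\alpha$ a bijection on an infinite-dimensional space; the bounded inverse theorem would then force $I=(J^\alpha)^{-1}J^\alpha$ to be compact, which is impossible in infinite dimensions. The ill-posedness of type~II in the sense of Nashed is the definitional consequence of $J^\alpha$ being compact with infinite-dimensional (hence non-closed) range. Finally, the Hilbert--Schmidt claim reduces to the elementary computation $\iint_{[0,1]^2}|k(s,t)|^2\,ds\,dt=\frac{1}{\Gamma(\alpha)^2}\int_0^1\frac{s^{2\alpha-1}}{2\alpha-1}\,ds=\frac{1}{2\alpha(2\alpha-1)\,\Gamma(\alpha)^2}$, which is finite precisely when $2\alpha-1>0$, i.e.\ $\tfrac12<\alpha<\infty$, this being exactly the condition for the kernel to be square-integrable and thus for $J^\alpha$ to be a Hilbert--Schmidt operator.
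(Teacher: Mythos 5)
Your argument is correct. Note that the paper offers no proof of Fact~\ref{fact:1} at all --- it is presented as a known result with pointers to the references \cite{Engl97}, \cite{GoVe91}, \cite{Kress89} --- so there is no in-paper argument to compare against; the steps you supply (Young's inequality for boundedness; truncation of the weakly singular kernel plus norm-approximation by Hilbert--Schmidt operators to get compactness in the regime $0<\alpha\le\tfrac12$; the semigroup identity $J^\beta J^\alpha=J^{\alpha+\beta}$ for injectivity; the reflection identity $(J^\alpha)^*=RJ^\alpha R$ together with $\overline{\mathcal{R}(J^\alpha)}=\mathcal{N}((J^\alpha)^*)^\perp$ for density; the open-mapping contradiction for non-closedness; and the elementary kernel integral for the Hilbert--Schmidt claim) are exactly the standard ones found in those references. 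The only point you pass over is that the formula (\ref{eq:adjoint}) really does define the Hilbert-space adjoint, which needs one application of Fubini's theorem to swap the order of integration in $\langle J^\alpha x, z\rangle$; this should be stated before (or instead of) invoking the identity $(J^\alpha)^*=RJ^\alpha R$, but it is a one-line fix rather than a gap in substance.
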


\begin{fact} \label{fact:2}
For every $0<\alpha<\infty$ the operator $J^\alpha$ possesses a uniquely determined singular system $\{\sigma_i(J^\alpha),u_i(J^\alpha),v_i(J^\alpha)\}_{i=1}^\infty$ with ordered singular values $\sigma_1(J^\alpha) > \sigma_2(J^\alpha) > ... >0$, where
$\lim_{i \to \infty} \sigma_i(J^\alpha)=0$, and two orthonormal systems $\{u_i(J^\alpha)\}_{i=1}^\infty $ and $\{v_i(J^\alpha)\}_{i=1}^\infty$, which are both complete in $L^2([0,1])$, such that for $i=1,2,...$
\begin{equation} \label{eq:SVD}
J^\alpha u_i(J^\alpha) = \sigma_i(J^\alpha)\, v_i(J^\alpha) \quad \mbox{and} \quad (J^\alpha)^* v_i(J^\alpha) = \sigma_i(J^\alpha)\, u_i(J^\alpha)\,.
\end{equation}
\end{fact}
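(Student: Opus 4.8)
The plan is to deduce Fact~\ref{fact:2} from the spectral theorem for compact self-adjoint operators applied to the companion operator $A := (J^\alpha)^* J^\alpha$, feeding in the injectivity and compactness provided by Fact~\ref{fact:1}. First I would note that $A$ is compact (a composition of compact operators), self-adjoint, and positive, because $\langle A x, x\rangle = \|J^\alpha x\|^2 \ge 0$ for all $x \in L^2([0,1])$. The spectral theorem then supplies eigenvalues $\lambda_1 \ge \lambda_2 \ge \cdots \ge 0$ with $\lambda_i \to 0$ together with an orthonormal system of eigenvectors. Since $J^\alpha$ is injective, $\langle A x, x\rangle = \|J^\alpha x\|^2 = 0$ forces $x = 0$, so $A$ is injective, every $\lambda_i$ is strictly positive, and the eigenvectors already exhaust $L^2([0,1])$ (the kernel of $A$ being trivial); this produces the complete orthonormal system $\{u_i\}_{i=1}^\infty$.

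Next I would set $\sigma_i := \sqrt{\lambda_i} > 0$ and $v_i := \sigma_i^{-1} J^\alpha u_i$. Then $J^\alpha u_i = \sigma_i v_i$ holds by definition, while $(J^\alpha)^* v_i = \sigma_i^{-1} A u_i = \sigma_i^{-1}\lambda_i u_i = \sigma_i u_i$, which are precisely the relations in \eqref{eq:SVD}. Orthonormality of $\{v_i\}$ follows from $\langle v_i, v_j\rangle = (\sigma_i\sigma_j)^{-1}\langle A u_i, u_j\rangle = \delta_{ij}$, and $\lim_{i\to\infty}\sigma_i = 0$ from $\lambda_i \to 0$. For completeness of $\{v_i\}$ I would argue by duality: if $w \perp v_i$ for all $i$, then $\langle (J^\alpha)^* w, u_i\rangle = \langle w, J^\alpha u_i\rangle = \sigma_i\langle w, v_i\rangle = 0$ for all $i$, so completeness of $\{u_i\}$ gives $(J^\alpha)^* w = 0$, and the injectivity of $(J^\alpha)^*$ from Fact~\ref{fact:1} yields $w = 0$. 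The singular values are intrinsically fixed as the square roots of the eigenvalues of $A$.

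The main obstacle is the \emph{strict} ordering $\sigma_1 > \sigma_2 > \cdots$, equivalently the simplicity of every eigenvalue of $A$, since the abstract spectral theorem only furnishes the non-strict inequalities. I would obtain this from the theory of totally positive (oscillation) kernels: the truncated power kernel $(s-t)_+^{\alpha-1}/\Gamma(\alpha)$ defining $J^\alpha$ is totally positive, so by the basic composition (Cauchy--Binet) formula the symmetric kernel of $A$ inherits total positivity and is an oscillation kernel, whence the Gantmacher--Krein theorem guarantees that its eigenvalues are positive and \emph{simple} (with $u_i$ having exactly $i-1$ sign changes). Simplicity then also delivers the uniqueness claim: each eigenspace being one-dimensional, $u_i$ is determined up to sign and $v_i = \sigma_i^{-1} J^\alpha u_i$ up to the same sign, so the triple $\{\sigma_i, u_i, v_i\}$ is unique in the customary sense. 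I expect the verification of the oscillation-kernel property, rather than the spectral-theoretic bookkeeping, to be the genuinely technical step.
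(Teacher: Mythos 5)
The paper does not actually prove Fact~\ref{fact:2}: it is quoted as well known, with pointers to \cite{Engl97,GoVe91,Kress89}, so there is no in-paper argument to compare against. Your spectral-theoretic skeleton is the standard textbook derivation and is correct as far as it goes: $A=(J^\alpha)^*J^\alpha$ is compact, self-adjoint and positive; injectivity of $J^\alpha$ (Fact~\ref{fact:1}) makes $A$ injective, so all eigenvalues are strictly positive and the eigenvectors form a complete orthonormal system; the definitions $\sigma_i=\sqrt{\lambda_i}$, $v_i=\sigma_i^{-1}J^\alpha u_i$ give \eqref{eq:SVD}; and your duality argument for completeness of $\{v_i\}$ correctly uses the injectivity of $(J^\alpha)^*$, which follows from the density of $\mathcal{R}(J^\alpha)$.

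The one non-routine ingredient is exactly the one you isolate, namely the strict ordering $\sigma_1>\sigma_2>\cdots$, and here your proposal has a genuine gap: you invoke total positivity of the kernel $(s-t)_+^{\alpha-1}/\Gamma(\alpha)$ and the Gantmacher--Krein oscillation theorem, but you do not verify the hypotheses, and this is precisely where the work lies. Two concrete difficulties: first, a Volterra kernel vanishes on half of the square $[0,1]^2$, so it is at best totally \emph{nonnegative}, and the composed symmetric kernel of $A$ vanishes where $\max(t,t')=1$; to apply Gantmacher--Krein one must check the strict positivity of the relevant compound kernels in the interior, which is not automatic from Cauchy--Binet applied to a totally nonnegative factor. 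Second, for $0<\alpha<1$ the kernel is unbounded along the diagonal, and the classical total-positivity results for truncated power kernels are usually formulated for exponents $\ge 0$, i.e.\ $\alpha\ge 1$, so the case of Abel operators is not covered by the cited machinery as stated. In fairness, the paper itself asserts the strict inequalities without justification, and the abstract spectral theorem only yields $\sigma_1\ge\sigma_2\ge\cdots$; for integer $\alpha=n$ an alternative route to simplicity would be through the $2n$-th order boundary value problem \eqref{eq:nfoldode} of Section~\ref{se3}. Your closing remark is also the right caveat: even with simple singular values, ``uniquely determined'' can only mean up to a simultaneous sign change of $(u_i,v_i)$ in each one-dimensional singular subspace.
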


The focus of the present note is on the case
$\alpha=n$ with natural numbers $n=1,2,...$, where $D^n y=y^{(n)}$ coincides with the $n$-fold generalized derivative of the Sobolev space function $y \in \mathcal{R}(J^n) \subset H^n([0,1])$. We try to
answer the frequently asked question why only for $n=1$ the complete {\sl singular system} $\{\sigma_i(J^n),u_i(J^n),v_i(J^n)\}_{i=1}^\infty$  of $J^n$ is made explicit in many textbooks and
papers, but for $n \ge 2$ such a detailed discussion is mostly avoided.
One of the reasons may be that there are no nice explicit formulas for the
singular systems.  We describe them by implicit transcendental equations which
become increasingly unhandy as $n$ grows.  Therefore we employ symbolic
computation (in form of the computer algebra system Mathematica) to derive
some of the formulas presented here. Also the arbitrary-precision arithmetic
that is available through such a system is crucial for finding some of the
numerical approximations.

\section{Singular value asymptotics of Riemann-Liouville fractional integral operators} \label{se2}

Many authors have discussed upper and lower bounds for the singular values $\sigma_i(J^\alpha)$ of $J^\alpha$ aimed at deriving a singular value asymptotics with respect to the Riemann-Liouville fractional integral operators
mapping in $L^2([0,1])$ in the case of specific exponents $\alpha$ and exponent intervals. However, in the paper \cite{VuGor94} we find the complete
asymptotics:

\begin{pro} \label{pro:asym}
For all $0<\alpha<\infty$ there exist constants $$0<\underline c (\alpha) \le \overline c (\alpha) < \infty$$ such that
$$\underline c (\alpha)  \,i^{-\alpha} \le  \sigma_i(J^\alpha)   \le    \overline c (\alpha) \,i^{-\alpha}.  $$
\end{pro}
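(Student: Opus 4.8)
The plan is to work with the nonnegative self-adjoint compact operator $A_\alpha := (J^\alpha)^* J^\alpha$, whose eigenvalues are precisely the squares $\sigma_i(J^\alpha)^2$ of the singular values; it therefore suffices to establish the two-sided bound $\underline{c}(\alpha)^2\, i^{-2\alpha} \le \lambda_i(A_\alpha) \le \overline{c}(\alpha)^2\, i^{-2\alpha}$ for the ordered eigenvalues $\lambda_1(A_\alpha) \ge \lambda_2(A_\alpha) \ge \dots > 0$. The governing heuristic is that the decay rate is dictated by the smoothing gained under $J^\alpha$: the operator maps $L^2([0,1])$ boundedly into a space of functions with $\alpha$ additional (generalized) derivatives, and such a smoothing of order $\alpha$ is classically reflected by a polynomial eigenvalue decay of order $2\alpha$. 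I would first record the semigroup identity $J^\alpha J^\beta = J^{\alpha+\beta}$ together with Fact~\ref{fact:1}, which guarantee compactness and injectivity and hence a genuine strictly positive eigenvalue sequence accumulating only at zero, as in Fact~\ref{fact:2}.

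For the upper bound I would pass to the approximation numbers $a_i(J^\alpha) := \inf\set{\norm{J^\alpha - F}{} : \operatorname{rank} F < i}$, which for a compact operator on a Hilbert space coincide with the singular values $\sigma_i(J^\alpha)$. The idea is to factor $J^\alpha = E_\alpha \circ \widetilde{J}^\alpha$, where $\widetilde{J}^\alpha : L^2([0,1]) \to H^\alpha([0,1])$ is the integration operator regarded as a \emph{bounded} map into a Sobolev-type range space, and $E_\alpha$ is the compact embedding of that space back into $L^2([0,1])$. By the multiplicativity of approximation numbers one gets $a_i(J^\alpha) = a_i(E_\alpha \widetilde{J}^\alpha) \le a_i(E_\alpha)\, \norm{\widetilde{J}^\alpha}{}$, and the classical asymptotics of the Kolmogorov/approximation numbers of a Sobolev embedding of order $\alpha$ on a bounded interval supply $a_i(E_\alpha) \le C\, i^{-\alpha}$. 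This yields $\sigma_i(J^\alpha) \le \overline{c}(\alpha)\, i^{-\alpha}$ with the correct power.

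The lower bound is the main obstacle, and I would extract it from the min--max (Courant--Fischer) principle
$$\lambda_i(A_\alpha) = \max_{\dim V = i}\; \min_{0 \neq \phi \in V} \frac{\norm{J^\alpha \phi}{}^2}{\norm{\phi}{}^2},$$
which only requires exhibiting, for each $i$, an $i$-dimensional trial subspace $V_i \subset L^2([0,1])$ on which $J^\alpha$ is bounded below by roughly $c(\alpha)\, i^{-\alpha}$. The natural candidate is a space spanned by localized, mutually (nearly) orthogonal oscillating bumps at a single length scale $h \sim 1/i$; on such a scale the convolution structure of $J^\alpha$ acts, to leading order, as multiplication by a factor of size $h^\alpha$, so that $\norm{J^\alpha \phi}{} \gtrsim h^\alpha \norm{\phi}{} \sim i^{-\alpha}\norm{\phi}{}$. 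The delicate points are (i) controlling the boundary layer near $s=0$, where the kernel $(s-t)^{\alpha-1}/\Gamma(\alpha)$ is singular for $\alpha<1$ and the range functions are forced to vanish, and (ii) keeping the implied constants uniform in $i$, since it is precisely through these constants and the $\Gamma(\alpha)$-normalization that the rate acquires its $\alpha$-dependence.

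As a cross-check and an alternative to the hand-built trial space, I would compare with the exactly solvable case $\alpha = 1$, where $\sigma_i(J^1) = \frac{2}{(2i-1)\pi} \sim \frac{1}{\pi i}$ already displays the rate $i^{-1}$, and with the translation-invariant model on $\IR$, where $J^\alpha$ becomes Fourier multiplication by a symbol of modulus $\abs{\xi}^{-\alpha}$; matching the eigenvalue-counting function of the interval problem to this symbol through a Birman--Solomyak/Weyl-type argument reproduces the two-sided estimate (and in fact pins down the sharp leading constant). Either route closes both inequalities and establishes the claimed asymptotics $\underline{c}(\alpha)\, i^{-\alpha} \le \sigma_i(J^\alpha) \le \overline{c}(\alpha)\, i^{-\alpha}$.
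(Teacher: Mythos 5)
First, a point of reference: the paper does not actually prove Proposition~\ref{pro:asym} --- it is quoted verbatim from Vu Kim Tuan and Gorenflo \cite{VuGor94} --- so your argument is being measured against the literature rather than against a proof in the text. Your upper bound is sound and standard: writing $J^\alpha$ as a bounded map into a Sobolev space of order $\alpha$ followed by the compact embedding into $L^2([0,1])$, and combining multiplicativity of approximation numbers with the classical rate $a_i\bigl(H^\alpha([0,1])\hookrightarrow L^2([0,1])\bigr)\asymp i^{-\alpha}$, gives $\sigma_i(J^\alpha)\le \overline{c}(\alpha)\,i^{-\alpha}$ once one quotes a mapping theorem for the Riemann--Liouville operator into the fractional Sobolev scale.

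The lower bound is where the proof actually lives, and your proposal does not close it. Courant--Fischer requires $\|J^\alpha\phi\|\ge c\,i^{-\alpha}\|\phi\|$ for \emph{every} $\phi$ in the $i$-dimensional trial space, not merely for each basis bump. The images $J^\alpha\phi_k$ of disjointly supported bumps are far from disjointly supported --- each one lives on an interval reaching all the way to $s=1$ --- so the cross terms $\langle J^\alpha\phi_k,J^\alpha\phi_\ell\rangle$ must be shown to be summably small compared with the diagonal terms $\sim h^{2\alpha}\|\phi_k\|^2$; this forces vanishing-moment conditions on the bumps and a Schur-type bound on the Gram matrix of the images. You name these issues as ``delicate points'' but do not resolve them, and the key step ``$\|J^\alpha\phi\|\gtrsim h^\alpha\|\phi\|$ on the span'' is an assertion, not a proof. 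A cleaner way to finish is the semigroup law $J^{\alpha}J^{\beta}=J^{\alpha+\beta}$ combined with the $s$-number inequality $\sigma_{i+j-1}(AB)\le\sigma_i(A)\,\sigma_j(B)$: choosing an integer $n>\alpha$ one gets $\sigma_i(J^\alpha)\ge\sigma_{2i-1}(J^n)/\sigma_i(J^{\,n-\alpha})$, and the numerator is controlled from below by the explicit singular values of $J^1$ (Proposition~\ref{pro:simple}) for $n=1$, respectively by Weyl asymptotics for the regular $2n$-th order boundary value problem \eqref{eq:nfoldode} for $n\ge2$, while the denominator is controlled by the upper bound you already established. Until the trial-space estimate is carried out or replaced by such an argument, the lower half of the proposition remains unproven.
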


As a consequence of Proposition~\ref{pro:asym} the {\sl degree of ill-posedness} (cf.~\cite{HoWo05}) of the operator equation (\ref{eq:opeq}) is $\alpha$ and grows with the level of integration. Abel integral equations
($0<\alpha<1$) are {\sl weakly ill-posed} and the problem of $n$-fold differentiation with $\alpha=n \in \IN$  and
\begin{equation} \label{eq:nfold}
\underline c (n)  \,i^{-n} \le  \sigma_i(J^n)   \le    \overline c (n) \,i^{-n}, \qquad i=1,2,...
\end{equation}
is {\sl mildly ill-posed}. No {\sl severely (exponentially) ill-posed} problem occurs in the context of equation
(\ref{eq:opeq}).

\section{The boundary value problem for the singular value decomposition}
\label{se3}

By deriving $u_i(J^n)\;(i \in \IN)$ from the well-known equations
\begin{alignat*}{2}
  [J^n u_i(J^n)]^{(n)} &= u_i(J^n) &&= \sigma_i(J^n)[v_i(J^n)]^{(n)}, \\
  [(J^n)^*v_i(J^n)]^{(n)} &= (-1)^n v_i(J^n) &&= \sigma_i(J^n) [u_i(J^n)]^{(n)},
\end{alignat*}
one can verify the singular system of $J^n$ from the following boundary value problem of an ordinary differential equation of order~$2n$:
\begin{equation} \label{eq:nfoldode}
\left\{ \begin{array}{l} \lambda\,u^{(2n)}(t)+(-1)^{(n+1)}u(t) =0, \qquad 0<t<1\,,\\ u(1)=u^\prime(1)=...=u^{(n-1)}(1)=0\,,\\u^{(n)}(0)=u^{(n+1)}(0)=...= u^{(2n-1)}(0) =0\,. \end{array}\right.
\end{equation}
More precisely, we are searching for all eigenvalues $\lambda>0$ such that the system (\ref{eq:nfoldode}) possesses nontrivial solutions $0 \not=u \in L^2([0,1])$. According to Proposition~\ref{pro:asym}, there will be with $\lambda=\lambda_i$ an infinite sequence $\lambda_1 > \lambda_2 > ...>0$ of ordered eigenvalues  $\lambda_i:=(\sigma_i(J^n))^2$ which are the $i$-largest eigenvalues of both operators $(J^n)^*J^n$ and  $J^n(J^n)^*$.   Moreover, with $u:=u_i(J^n)$ there will be an associated orthonormal eigensystem $\{u_i(J^n)\}_{i=1}^\infty $ which leads with $v_i(J^n):=\frac{1}{\sigma_i(J^n)}J^n u_i(J^n)$ to the  orthonormal eigensystem $\{v_i(J^n)\}_{i=1}^\infty$. Thus the singular system  $\{\sigma_i(J^n),u_i(J^n),v_i(J^n)\}_{i=1}^\infty$ of $J^n$ is complete.


The computation of the eigensystem follows a schema listed in the algorithm below that has been frequently used in the literature for the case $n=1$ (for results see Section~\ref{sec:4}) and can be applied to any larger integer $n \in \mathbb{N}$.
This approach is based on the zeros $\nu$ of the characteristic polynomial
	\begin{equation}\label{char_pol}
		p_n(\nu) = \lambda \nu^{2n}+(-1)^{n+1}
	\end{equation}
of the homogeneous differential equation $\lambda\,u^{(2n)}(t)+(-1)^{(n+1)}u(t) =0 $ of order~$2n$ occurring in the boundary value problem \eqref{eq:nfoldode}.
It is clear that these zeros $\nu$ obey the equation
	\begin{equation} \label{eq:findzero}
		\lambda \nu^{2n}=(-1)^{n}.
	\end{equation}
For fixed $\lambda>0$, the solutions~$u$ of this ODE are characterized by a corresponding fundamental system
  $$\bigl\{\varphi_k(t) \mathrel{\big|} k=0,\dots, 2n-1\bigr\}$$
such that
\begin{equation} \label{eq:u}
	u(t) = \sum_{k=0}^{2n-1} \gamma_k \varphi_{k}(t).
\end{equation}
Each non-zero coefficient vector $\gamma=(\gamma_0,\gamma_1, \dots ,\gamma_{2n-1})^{T}\in \mathbb{R}^{2n}$ represents a non-trivial solution of the ODE. Taking into account the required initial and terminal conditions
of the boundary value problem~\eqref{eq:nfoldode}, the vector $\gamma$ must satisfy the linear system $A_n(\lambda) \gamma =0$ with a singular
quadratic matrix
\begin{equation}\label{eq:matrix_An}
A_n(\lambda)=
	\begin{pmatrix}
		\varphi_0(1)&\varphi_1(1) & \dots & \varphi_{2n-1}(1)\\
		\varphi_0'(1)&\varphi_1'(1) & \dots & \varphi_{2n-1}'(1)\\
		\vdots & \vdots &  & \vdots \\
		\varphi_0^{(n-1)}(1)&\varphi_1^{(n-1)}(1) & \dots & \varphi_{2n-1}^{(n-1)}(1)\\
		\varphi_0^{(n)}(0)&\varphi_1^{(n)}(0) & \dots & \varphi_{2n-1}^{(n)}(0)\\
		\vdots & \vdots & & \vdots \\
		\varphi_0^{(2n-1)}(0)&\varphi_1^{(2n-1)}(0) & \dots & \varphi_{2n-1}^{(2n-1)}(0)\\
	\end{pmatrix} \in \mathbb{R}^{2n \times 2n}.
\end{equation}
This means that only those $\lambda>0$ for which
\begin{equation} \label{eq:eigen}
	\det  (A_n(\lambda)) = 0
\end{equation}
yield non-zero vectors $\gamma$ such that $u$ in \eqref{eq:u} is non-trivial. It can be seen that only a countable set $\{\lambda_i\}_{i=1}^\infty$ of values $\lambda>0$ satisfies \eqref{eq:eigen}. This set consists of the eigenvalues of $J^{\ast}_{n}J_{n}$ with associated eigenfunctions
$$u_i(t) = \sum_{k=0}^{2n-1} \gamma_k^{(i)} \varphi_{i,k}(t),$$
where $\{\varphi_{i,0},\dots,\varphi_{i,2n-1}\}$ is the fundamental system associated to the eigenvalue~$\lambda_i$, and the vector $\gamma^{(i)}=\bigl(\gamma^{(i)}_0,\gamma^{(i)}_1, \dots ,\gamma^{(i)}_{2n-1}\bigr)^{T}\in \mathbb{R}^{2n}$ satisfies the linear system $A_n(\lambda_i) \gamma^{(i)}=0$ and is normalized such that $\|u_i\|_{L^2([0,1])}=1$.

Now we are ready to formulate the algorithm for obtaining the desired eigenvalues and eigensystems.

\begin{algorithm} \label{alg:algo}
\begin{itemize}
\item[]
\item[(i)] Compute, by solving equation \eqref{eq:findzero}, the $2n$  zeros of the characteristic polynomial $p_n(\nu)$ of the ODE occurring in problem \eqref{eq:nfoldode}.
\item[(ii)]  Construct the fundamental system $\bigl\{\varphi_k(t) \mathrel{\big|} k=0,\dots, 2n-1 \bigr\}$ of the ODE for arbitrary $\lambda>0$.
\item[(iii)] Form the $(2n\times 2n)$-matrix $A_n(\lambda)$ that expresses the initial and terminal conditions occurring in \eqref{eq:nfoldode}.
\item[(iv)] Determine the eigenvalues $\lambda_i,\,i=1,2,...,$ of $J^{\ast}_{n}J_{n}$ by solving the equation  $\det (A_n(\lambda_i)) = 0$.
\item[(v)] Calculate the eigenfunctions $u_i=\sum_{k=0}^{2n-1}  \gamma_k^{(i)} \varphi_{i,k}(t)$ for all $i=1,2,...,$ such that  $\|u_i\|_{L^2(0,1)}=1$.
\end{itemize}
\end{algorithm}
Although this algorithm seems to be straightforward, we will see that only steps (i)-(iii) can be done explicitly.
\begin{pro}
The zeros of the characteristic polynomial \eqref{char_pol} are given by
\begin{equation}
	\nu_k= \left (\frac{1}{\lambda} \right )^{\!\frac{1}{2n}}{\exp}{\left(i\,\frac{[n]_2\pi+2\pi  k}{2n}\right)}, \hspace{1cm} k=0,\dots, 2n-1,
\end{equation}
where $[n]_2:= n \mathrel{\mathrm{mod}} 2$ for $n\in \mathbb{N}$.
\end{pro}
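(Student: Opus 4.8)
The plan is to reduce the polynomial equation $p_n(\nu)=0$ to the extraction of $2n$-th roots of a single complex number and then simply read off modulus and argument. First I would invoke the reformulation \eqref{eq:findzero}, namely $\lambda\nu^{2n}=(-1)^n$; since $\lambda>0$ this is equivalent to
\[
\nu^{2n}=\frac{(-1)^n}{\lambda}.
\]
Because $(-1)^n$ depends only on the parity of $n$, I would rewrite the right-hand side in polar form as $\tfrac{1}{\lambda}\,e^{i\pi[n]_2}$, using that $e^{i\pi[n]_2}=1$ when $n$ is even and $e^{i\pi[n]_2}=-1=(-1)^n$ when $n$ is odd.

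Next I would apply the standard description of the $2n$-th roots of a nonzero complex number $r\,e^{i\theta}$ with $r>0$: they are $r^{1/(2n)}\exp\bigl(i(\theta+2\pi k)/(2n)\bigr)$ for $k=0,\dots,2n-1$. With $r=1/\lambda$ and $\theta=\pi[n]_2$ this yields exactly
\[
\nu_k=\Bigl(\tfrac{1}{\lambda}\Bigr)^{1/(2n)}\exp\!\Bigl(i\,\frac{[n]_2\pi+2\pi k}{2n}\Bigr),\qquad k=0,\dots,2n-1,
\]
which is the asserted formula. To finish I would confirm that these $2n$ values are pairwise distinct: their arguments differ by integer multiples of $2\pi/(2n)$ and all lie within an interval of length $2\pi$, so, the polynomial $p_n$ in \eqref{char_pol} having degree $2n$, they constitute the complete list of its zeros.

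There is no genuine obstacle here; the only point requiring care is the uniform bookkeeping of the sign $(-1)^n$ across the two parity cases. Encoding it through the phase offset $\pi[n]_2$ is precisely what lets a single closed-form expression cover both the even case, where $\nu^{2n}=1/\lambda$ has the plain $2n$-th roots of unity as phases, and the odd case, where $\nu^{2n}=-1/\lambda$ shifts every phase by $\pi/(2n)$.
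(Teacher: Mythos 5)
Your proposal is correct and follows essentially the same route as the paper: reduce $p_n(\nu)=0$ to $\nu^{2n}=(-1)^n/\lambda$ and extract the $2n$-th roots, with the phase offset $\pi[n]_2$ encoding the parity of $n$ (the paper simply writes out the even and odd cases separately rather than unifying them). Your added remark that the $2n$ roots are pairwise distinct is a small bonus the paper omits.
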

\begin{proof}

	For $n$ even, we have to solve the equation $\nu^{2n}=1/\lambda$. Its zeros are given as
	\begin{equation}
		\nu_k = \left (\frac{1}{\lambda} \right )^{\!\frac{1}{2n}}e^{i\frac{2\pi  k}{2n}}, \hspace{1cm} k=0,\dots, 2n-1.
	\end{equation}
	For $n$ odd, the equation reads
	\begin{equation}
			\nu^{2n}=\frac{-1}{\lambda}=\frac{e^{i\pi}}{\lambda}
	\end{equation}
having the roots
\begin{equation}
	\nu_k = \left (\frac{1}{\lambda} \right )^{\!\frac{1}{2n}}e^{i\frac{\pi+2\pi  k}{2n}}, \hspace{1cm} k=0,\dots, 2n-1.\qedhere
\end{equation}
\end{proof}
As roots of the characteristic polynomial, the $\nu_k$ are either real, or if one root is complex, then its conjugate is also a root. In detail, we have the assertions of the following proposition.

\begin{pro}\label{rootsandconjugates}
For $n$ even, there exist two real zeros
\begin{eqnarray}
\nu_0 &=& \lambda^{-\frac{1}{2n}}\\
\nu_n	&=& -\lambda^{-\frac{1}{2n}}.
\end{eqnarray}
Additionally, we have
\begin{equation}
	\overline{\nu_k} = \nu_{2n-k}, \hspace{1cm}k=1,\dots, n-1.
\end{equation}
If $n$ is odd, all zeros are complex and satisfy
\begin{equation}
	\overline{\nu_k} = \nu_{2n-(k+1)}, \hspace{1cm}k=0,\dots ,n-1.
\end{equation}
\end{pro}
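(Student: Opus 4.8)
The plan is to verify every assertion by direct substitution into the closed form for the roots established in the preceding proposition, namely $\nu_k = \rho\,\exp\!\bigl(i\,\frac{[n]_2\pi + 2\pi k}{2n}\bigr)$. I abbreviate the common positive modulus $\rho := \lambda^{-1/(2n)}$ and write $\theta_k := \frac{[n]_2\pi + 2\pi k}{2n}$ for the argument, so that $\nu_k = \rho\,e^{i\theta_k}$ and $\overline{\nu_k} = \rho\,e^{-i\theta_k}$. Since $\rho$ is a fixed positive real, every claim collapses to a comparison of arguments modulo $2\pi$, and the $2\pi$-periodicity of $e^{i\,\cdot}$ does all the work.

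For the even case I first treat the two real roots. Here $[n]_2 = 0$, so $\theta_0 = 0$ and $\theta_n = \pi n/n = \pi$; hence $\nu_0 = \rho\,e^{0} = \rho$ and $\nu_n = \rho\,e^{i\pi} = -\rho$, which are exactly the two claimed real values. For the conjugate pairing I would compute $\theta_{2n-k} = \frac{2\pi(2n-k)}{2n} = 2\pi - \frac{\pi k}{n}$, so that $e^{i\theta_{2n-k}} = e^{-i\pi k/n} = e^{-i\theta_k}$; this yields $\nu_{2n-k} = \overline{\nu_k}$, and restricting to $k=1,\dots,n-1$ selects precisely the genuinely complex roots, the indices $0$ and $n$ being the self-conjugate real ones.

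The odd case is handled analogously, now with $[n]_2 = 1$ and $\theta_k = \frac{(2k+1)\pi}{2n}$. The only step that is not a pure periodicity computation is the claim that all roots are complex: a root $\nu_k$ is real precisely when $\theta_k$ is an integer multiple of $\pi$, i.e.\ when $2n$ divides $2k+1$, which is impossible because $2k+1$ is odd while $2n$ is even. For the conjugate relation I would evaluate $\theta_{2n-(k+1)} = \frac{\bigl(2(2n-k-1)+1\bigr)\pi}{2n} = \frac{(4n-2k-1)\pi}{2n} = 2\pi - \theta_k$, whence $\nu_{2n-(k+1)} = \rho\,e^{i(2\pi-\theta_k)} = \rho\,e^{-i\theta_k} = \overline{\nu_k}$, as asserted.

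Since each statement reduces to an elementary identity between arguments modulo $2\pi$, I do not anticipate any genuine obstacle. The only point that requires a moment's care is the bookkeeping of the conjugation indices $2n-k$ in the even case versus $2n-(k+1)$ in the odd case: the shift by one in the odd case is exactly what absorbs the extra $\pi$ coming from $[n]_2 = 1$, and the parity argument is the single non-computational observation underpinning the ``all complex'' claim.
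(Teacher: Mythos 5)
Your proposal is correct and follows essentially the same route as the paper: direct substitution into the explicit formula $\nu_k=\lambda^{-1/(2n)}e^{i\theta_k}$, comparison of arguments modulo $2\pi$, and the parity observation that $2k+1$ odd cannot be a multiple of $2n$ in the odd case. All index computations ($2n-k$ versus $2n-(k+1)$) check out.
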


\begin{proof}
	If $n$ is even, the values of $\nu_k$ for $k=0$ and $k=n$ are evident. Moreover, we have for $k=1,\dots, n-1$
	\begin{equation}
		\overline{\nu_k}= \lambda^{-\frac{1}{2n}} \overline{e^{^{i\frac{\pi k}{n}}}}= \lambda^{-\frac{1}{2n}} e^{^{-i\frac{\pi k}{n}}}= \lambda^{-\frac{1}{2n}} e^{^{i\pi\frac{2n-k}{n}}}=\nu_{2n-k}.
	\end{equation}
	In the case that $n$ is odd, the zeros are given by
	\begin{equation}
		\nu_k= \lambda^{-\frac{1}{2n}}e^{^{\frac{i\pi}{2n}(1+2k)}},
	\end{equation}
	and as there is no $k\in \mathbb{N}$ such that $1+2k$ equals zero or a multiple of $2n$, there are no real roots. Additionally, we have
	\begin{align*}
		\lambda^{\frac{1}{2n}}\overline{\nu_k} &= e^{^{\frac{i\pi}{2n}(1+2k)}}= e^{^{-\frac{i\pi}{2n}(1+2k)+2\pi i}}= e^{^{\frac{i\pi}{n}(2n-k-\frac{1}{2})}}\\
		&= e^{^{\frac{i\pi}{n}(\frac{1}{2}+(2n-(k+1))}}=\lambda^{\frac{1}{2n}}\nu_{2n-(k+1)}. \qedhere
	\end{align*}
\end{proof}
It is well known that a complex root and its complex conjugate create a pair of real fundamental solutions of an ODE. Specifically, a complex root  $\nu_k=\alpha_k \pm i\beta_k$ with multiplicity one creates the two real fundamental solutions
\begin{equation}\label{FS}
	e^{\alpha_k\cdot t}\cos(\beta_k\cdot t),\hspace{1cm}e^{\alpha_k\cdot t}\sin(\beta_k\cdot t).
\end{equation}
For what follows, let us denote the roots of the characteristic polynomial 	$p_n(\lambda)$ by $\nu_k^{(e)}$ if $n$ is even and by  $\nu_k^{(o)}$ if $n$ is odd. Then
we obtain the following result.
\begin{pro}
	Let $\nu_k^{(e,o)}=\alpha_k^{(e,o)} + i\beta_k^{(e,o)}$ denote the roots of the characteristic polynomial \eqref{char_pol} and
\begin{equation}\label{FundamentalS}
			\left \{\varphi_0,~\varphi_1 , \dots , \varphi_{2n-1}\right \}
		\end{equation}
the fundamental system of the ODE occurring in \eqref{eq:nfoldode}. Then we have:
	\begin{enumerate}
		\item[(a)] If $n$ is even then, for $k=1,\dots , n-1$, the system (\ref{FundamentalS}) is characterized as
				\begin{eqnarray*}
			\varphi_0(t)&=& e^{(\lambda^{^{-1/2n}})\cdot t}\\
			\varphi_1(t)&=& e^{(-\lambda^{^{-1/2n}})\cdot t}\\
			\varphi_{2k}(t)&=& e^{\alpha_k^{(e)}\cdot t}\cos(\beta_k^{(e)}\cdot t)\\
			\varphi_{2k+1}(t)&=& e^{\alpha_k^{(e)}\cdot t}\sin(\beta_k^{(e)}\cdot t).
		\end{eqnarray*}
				\item[(b)] If $n$ is odd then, for $k=1,\dots , n-1$, the system (\ref{FundamentalS}) is characterized as
		\begin{eqnarray*}
			\varphi_{2k}(t)&=& e^{\alpha_k^{(o)}\cdot t}\cos(\beta_k^{(o)}\cdot t)\\
			\varphi_{2k+1}(t)&=& e^{\alpha_k^{(o)}\cdot t}\sin(\beta_k^{(o)}\cdot t).
		\end{eqnarray*}
			\end{enumerate}
\end{pro}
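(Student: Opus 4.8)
The plan is to invoke the standard theory of homogeneous linear ordinary differential equations with constant coefficients, combined with the explicit description of the roots $\nu_k$ obtained in the previous propositions. The crucial preliminary observation is that all $2n$ roots are \emph{simple}: by the explicit formula each $\nu_k$ has modulus $\lambda^{-1/2n}$, while its argument increases in equal increments of $\pi/n$ as $k$ ranges over $0,\dots,2n-1$, so the arguments are pairwise distinct modulo $2\pi$. Hence $p_n$ has $2n$ distinct zeros, and the associated complex exponentials $\{e^{\nu_k t}\}_{k=0}^{2n-1}$ form a fundamental system of the ODE; their linear independence follows at once from the nonvanishing Wronskian, which at $t=0$ reduces to the Vandermonde determinant $\prod_{j<k}(\nu_k-\nu_j)\neq 0$.

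Second, I would pass from the complex exponentials to the real fundamental system by the standard device recalled in \eqref{FS}: whenever $\nu=\alpha+i\beta$ is a simple complex root whose conjugate $\overline{\nu}=\alpha-i\beta$ is also a root, the two-dimensional complex span of $e^{\nu t}$ and $e^{\overline{\nu}t}$ coincides with the real span of $e^{\alpha t}\cos(\beta t)$ and $e^{\alpha t}\sin(\beta t)$. It therefore only remains to pair up the roots correctly, read off real and imaginary parts, and check the count.

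For the third step I would split according to the parity of $n$ and apply Proposition~\ref{rootsandconjugates}. If $n$ is even, the two real roots $\nu_0=\lambda^{-1/2n}$ and $\nu_n=-\lambda^{-1/2n}$ give the real exponentials $\varphi_0$ and $\varphi_1$, whereas the remaining $2n-2$ roots decompose into the $n-1$ conjugate pairs $\{\nu_k,\nu_{2n-k}\}$, $k=1,\dots,n-1$, each contributing the cosine/sine pair $\varphi_{2k},\varphi_{2k+1}$ with $\alpha_k^{(e)}=\Re\,\nu_k$ and $\beta_k^{(e)}=\Im\,\nu_k$. If $n$ is odd there are no real roots, and all $2n$ roots form the $n$ conjugate pairs $\{\nu_k,\nu_{2n-(k+1)}\}$, each again producing a cosine/sine pair; for every one of the $2n$ functions to be defined, the index in case~(b) must run over $k=0,\dots,n-1$, which fills $\varphi_0,\dots,\varphi_{2n-1}$.

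The computation itself is routine; the only point demanding genuine care is the bookkeeping in this last step, namely verifying that the pairing supplied by Proposition~\ref{rootsandconjugates} exhausts the roots without repetition, so that the resulting trigonometric functions (together with the two real exponentials in the even case) number exactly $2n$. This is precisely where the simplicity of the roots is indispensable: it guarantees both that no root is lost through an accidental coincidence and that the real functions so obtained remain linearly independent, whence \eqref{FundamentalS} is indeed a fundamental system.
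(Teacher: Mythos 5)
Your proof is correct and follows essentially the same route as the paper's, which simply cites the construction \eqref{FS} and the conjugate-pairing of Proposition~\ref{rootsandconjugates}; you merely make explicit the details the paper leaves implicit (simplicity of the roots and linear independence via the Vandermonde/Wronskian). Your observation that the index in case (b) must run over $k=0,\dots,n-1$ in order to produce all $2n$ functions is a correct reading of the count and flags a small indexing slip in the statement as printed.
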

\begin{proof}
	Taking into account  \eqref{FS}, the proof follows by the characterization of the roots and their complex conjugates in Proposition~\ref{rootsandconjugates}.
\end{proof}


\section{The onefold integration operator} \label{sec:4}
Along the lines outlined above in the algorithm and frequently presented in the literature one finds the singular system for $n=1$, i.e.~for the simple integration operator $J^1$, from the ODE system
\begin{equation*} \label{eq:1foldode}
\left\{ \begin{array}{l} \lambda\,u^{\prime\prime}(t)+u(t) =0, \qquad 0<t<1\,,\\ u(1)=0\,,\\u^\prime(0)=0\,. \end{array}\right.
\end{equation*}
The explicit structure of this singular system is outlined in the following proposition.
\begin{pro} \label{pro:simple}
For $n=1$ we have the explicitly given singular system
\[
  \left\{
  \sigma_i=\tfrac{2}{(2i-1)\pi},
  u_i(t) = \sqrt{2}\,{\cos}{\left({\left(i-\tfrac12\right)} \pi t\right)},
  v_i(t) = \sqrt{2}\,{\sin}{\left({\left(i-\tfrac12\right)} \pi t\right)}
  \right\}_{i=1}^\infty
\]
of the operator $J^1$ mapping in the Hilbert space $L^2([0,1])$. Hence, formula (\ref{eq:nfold}) applies in the form
$$ \frac{1}{\pi}  \,i^{-1} \le  \sigma_i(J^1)   \le \frac{2}{\pi} \,   i^{-1}, \qquad i=1,2,...\, .$$
\end{pro}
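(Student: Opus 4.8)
The plan is to apply the general algorithm (steps (i)--(v)) to the special case $n=1$, where the ODE reduces to the classical equation $\lambda u''(t)+u(t)=0$ with the mixed boundary conditions $u(1)=0$ and $u'(0)=0$. First I would solve the characteristic equation $\lambda\nu^2+1=0$, giving the purely imaginary roots $\nu=\pm i/\sqrt{\lambda}$; setting $\omega:=1/\sqrt{\lambda}$, the real fundamental system is $\{\cos(\omega t),\sin(\omega t)\}$ by the rule \eqref{FS}. A general solution is $u(t)=\gamma_0\cos(\omega t)+\gamma_1\sin(\omega t)$.

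Next I would impose the two boundary conditions to build the $2\times 2$ matrix $A_1(\lambda)$. The condition $u'(0)=0$ forces $\gamma_1\omega=0$, hence $\gamma_1=0$, so $u(t)=\gamma_0\cos(\omega t)$. The condition $u(1)=0$ then demands $\cos(\omega)=0$ for a nontrivial solution, which is precisely the transcendental equation $\det(A_1(\lambda))=0$ in this case. Its positive solutions are $\omega=(i-\tfrac12)\pi$ for $i=1,2,\dots$, whence $\sqrt{\lambda}=1/\omega$ and, since $\sigma_i=\sqrt{\lambda_i}$, one reads off $\sigma_i=\tfrac{1}{(i-\frac12)\pi}=\tfrac{2}{(2i-1)\pi}$. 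The corresponding eigenfunctions are $u_i(t)=\gamma_0\cos\bigl((i-\tfrac12)\pi t\bigr)$.

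I would then normalize in $L^2([0,1])$: computing $\int_0^1\cos^2\bigl((i-\tfrac12)\pi t\bigr)\,dt=\tfrac12$ gives $\gamma_0=\sqrt{2}$, yielding the stated $u_i$. The partner functions follow from the defining relation $v_i=\tfrac{1}{\sigma_i}J^1 u_i$; evaluating $[J^1 u_i](s)=\int_0^s u_i(t)\,dt$ and dividing by $\sigma_i$ produces $v_i(t)=\sqrt{2}\,\sin\bigl((i-\tfrac12)\pi t\bigr)$, which one checks is orthonormal and complete in $L^2([0,1])$. Finally, the bound on $\sigma_i$ is immediate: since $\sigma_i=\tfrac{2}{(2i-1)\pi}$ and $i\le 2i-1\le 2i$ for $i\ge 1$, one gets $\tfrac{1}{\pi}i^{-1}\le\sigma_i\le\tfrac{2}{\pi}i^{-1}$, which is \eqref{eq:nfold} with $\underline c(1)=1/\pi$ and $\overline c(1)=2/\pi$.

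There is essentially no hard part here, since $n=1$ is exactly the case where the transcendental equation $\cos(\omega)=0$ factors into an elementary closed form; the only mild care is in verifying that $\{v_i\}$ forms a complete orthonormal system rather than merely an orthonormal family, which follows from Fact~\ref{fact:2} together with the injectivity of $J^1$ asserted in Fact~\ref{fact:1}. The main value of recording this case is as a template: it makes visible which steps of the algorithm cease to be explicit once $n\ge 2$, namely step (iv), where $\det(A_n(\lambda))=0$ no longer reduces to so simple a transcendental equation.
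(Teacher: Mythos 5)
Your proposal is correct and follows exactly the route the paper intends: it instantiates the general algorithm of Section~\ref{se3} at $n=1$ (solve $\lambda u''+u=0$ with $u(1)=u'(0)=0$, obtain $\cos(\lambda^{-1/2})=0$, hence $\sigma_i=\sqrt{\lambda_i}=\tfrac{2}{(2i-1)\pi}$, normalize, and recover $v_i$ via $v_i=\sigma_i^{-1}J^1u_i$), which is precisely the ``frequently presented in the literature'' derivation the paper alludes to without writing out. All computations check out, including the normalization constant $\sqrt{2}$ and the elementary bound $i\le 2i-1\le 2i$ yielding the constants $1/\pi$ and $2/\pi$ in \eqref{eq:nfold}.
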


\section{The twofold integration operator} \label{se4}
\subsection{General assertions}
In the case $n=2$, i.e.~for the twofold integration operator $J^2$, the ODE-system (\ref{eq:nfoldode}) attains the form:
\begin{equation} \label{eq:twofoldode}
\left\{ \begin{array}{l} \lambda\,u^{(4)}(t)-u(t) =0, \qquad 0<t<1\,,\\ u(1)=u^\prime(1)=0\,,\\u^{\prime \prime}(0)=u^{\prime \prime \prime}(0)=0\,. \end{array}\right.
\end{equation}
For the eigenfunctions $u=u_i(J^2)$ it is a necessary condition that they satisfy the homogeneous fourth-order differential equation in (\ref{eq:twofoldode}), which implies the ansatz structure via the corresponding fundamental system as
\[
  u(t)=\gamma_1\, {\exp}{\left(\frac{t}{\lambda^{1/4}}\right)} +
  \gamma_2\, {\exp}{\left(-\frac{t}{\lambda^{1/4}}\right)} +
  \gamma_3\, {\sin}{\left(\frac{t}{\lambda^{1/4}} \right)} +
  \gamma_4\, {\cos}{\left(\frac{t}{\lambda^{1/4}} \right)}.
\]
To obtain such $u \not=0$, the linear $(4 \times 4)$-system of equations
\[
  A_2(\mu)\cdot(\gamma_1,\gamma_2,\gamma_3,\gamma_4)^T=(0,0,0,0)^T
  \quad\text{with}\quad \mu:=\lambda^{-1/4}
\]
must have a singular matrix $A_2(\mu)$, which means that
$${\rm det}\left(\begin{array}{rrrr} e^\mu & e^{-\mu} & \sin(\mu) & \cos(\mu)\\e^\mu & -e^{-\mu} & \cos(\mu) & -\sin(\mu) \\ 1 & 1 & 0 & -1\\ 1 & -1 & -1 & 0\end{array} \right)=4\bigl(\cos(\mu)\cosh(\mu)+1\bigr) =0 \,.$$
This leads to the following proposition:

\begin{pro} \label{pro:lambda}
The eigenvalues $\lambda$ of the operator $(J^2)^*J^2$ are the solutions of the nonlinear transcendental equation
\begin{equation} \label{eq:lambda}
\cos\left( \frac{1}{\lambda^{1/4}}\right)\cdot \cosh\left( \frac{1}{\lambda^{1/4}}\right)+1 =0.
\end{equation}
\end{pro}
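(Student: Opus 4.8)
The plan is to show that the singularity of the matrix $A_2(\mu)$ is governed precisely by the displayed determinant formula, so that the eigenvalue condition $\det(A_2(\mu))=0$ reduces to $\cos\mu\cosh\mu+1=0$ and, after substituting $\mu=\lambda^{-1/4}$, yields equation \eqref{eq:lambda}.

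First I would set up the matrix $A_2(\mu)$ carefully from the boundary conditions. Differentiating the ansatz $u(t)=\gamma_1 e^{\mu t}+\gamma_2 e^{-\mu t}+\gamma_3\sin(\mu t)+\gamma_4\cos(\mu t)$ up to third order and evaluating, the four conditions $u(1)=u'(1)=0$ and $u''(0)=u'''(0)=0$ become linear equations in $\gamma=(\gamma_1,\gamma_2,\gamma_3,\gamma_4)^T$. The derivative conditions carry overall scalar factors $\mu,\mu^2,\mu^3$; since $\mu=\lambda^{-1/4}>0$ for every admissible $\lambda>0$, these nonzero factors may be dropped without altering the zero set of the determinant, which is exactly why the rows of $A_2(\mu)$ appear in the normalized form given above. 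Following the general scheme around \eqref{eq:matrix_An}--\eqref{eq:eigen}, a nontrivial eigenfunction $u\neq 0$ exists if and only if the homogeneous system $A_2(\mu)\gamma=0$ admits a nonzero solution, i.e.\ if and only if $\det(A_2(\mu))=0$.

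The core of the argument is then the evaluation of the $4\times4$ determinant. I would expand along the first column, whose entries are $e^\mu,e^\mu,1,1$, reducing the problem to four $3\times3$ minors, and simplify each using the elementary identities $e^\mu e^{-\mu}=1$ and $\sin^2\mu+\cos^2\mu=1$. Collecting the surviving terms, the exponential--trigonometric cross terms combine into $2\cos\mu\,(e^\mu+e^{-\mu})$, and invoking $\cosh\mu=\tfrac12(e^\mu+e^{-\mu})$ turns this into $4\cos\mu\cosh\mu$, while the remaining terms contribute the constant $4$, yielding $\det(A_2(\mu))=4\bigl(\cos\mu\cosh\mu+1\bigr)$. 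This determinant computation is the one genuinely calculational step and the main place where sign-bookkeeping errors can creep in; it is therefore natural to cross-check it with the computer algebra system already employed elsewhere in the paper.

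Finally, since the prefactor $4$ never vanishes, $\det(A_2(\mu))=0$ is equivalent to $\cos\mu\cosh\mu+1=0$. Substituting back $\mu=1/\lambda^{1/4}$ gives exactly the transcendental equation \eqref{eq:lambda}, and by the equivalence established in the first step its positive solutions are precisely the eigenvalues $\lambda$ of $(J^2)^*J^2$, completing the proof.
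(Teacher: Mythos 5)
Your proposal is correct and follows essentially the same route as the paper: form the matrix $A_2(\mu)$ from the four boundary conditions applied to the fundamental-system ansatz (dropping the harmless row factors $\mu$, $\mu^2$, $\mu^3$), evaluate $\det(A_2(\mu))=4\bigl(\cos\mu\cosh\mu+1\bigr)$, and substitute $\mu=\lambda^{-1/4}$. The determinant value you state checks out, so no gap remains.
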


\medskip

In the next subsection we motivate the fact that the sequence
$\{\lambda_i\}_{i=1}^\infty$ of solutions to (\ref{eq:lambda}) is of the form
$\lambda_i=\frac{1}{((i-\frac{1}{2})\pi+\varepsilon_i)^4}\;(i=1,2,...)$, where
the sequence $\{\varepsilon_i\}_{i=1}^\infty$ tends to zero exponentially
fast. This gives evidence that the singular values $\sigma_i(J^2)$ are very
close to
\[
  \frac{1}{{\left((i+\frac{1}{2})\pi\right)}{}^2}\quad(i=1,2,...)
\]
for sufficiently large $i$, which is in accordance with the assertion of
Proposition~\ref{pro:asym} in the case $\alpha:=2$.

\subsection{On the zeros of the function $f(z)=\cos(z) \cosh(z)+1 =0$ } \label{se5}

To our knowledge there does not exist a closed form for the zeros of the
transcendental equation $f(z) = \cos(z) \cosh(z) + 1 = 0$. From the form of
the equation it becomes apparent that there are infinitely many zeros, whose
distribution is approximately $\pi$-periodic. Applying Newton's method to
$f(z)$, we find the following numeric values for the first few positive roots:
\begin{align*}
  z_1 &= 1.875104068711961166445308241078214...\\ 
  z_2 &= 4.694091132974174576436391778019812...\\ 
  z_3 &= 7.854757438237612564861008582764570...\\ 
  z_4 &= 10.99554073487546699066734910785470...\\ 
  z_5 &= 14.13716839104647058091704681255177...\\ 
\end{align*}
The almost-periodic behavior of $\{z_i\}_{i=1}^\infty$ suggests to write
\begin{equation}\label{eq.zn}
  z_i = (i - \tfrac12)\pi + \eps_i,
\end{equation}
where the sequence $\{\eps_i\}_{i=1}^\infty$ tends to zero exponentially
fast. Our first goal is to derive a bound on the absolute value of~$\eps_i$,
thereby proving the claimed asymptotic behavior of $\{\eps_i\}_{i=1}^\infty$.

For this purpose, consider the function
\[
  g(z):=f(z)-1=\cos(z)\cosh(z),
\]
whose zeros are at the positions
\[
  \zeta_i:=(i-\tfrac12)\pi \quad(i=1,2,...).
\]
The locations where the graph of $g(z)$ intersects the line $y=-1$ are exactly
the zeros of~$f(z)$. From $g''(z)=-2\sin(z)\sinh(z)$ we see that $g(z)$ is
convex if $\sin(z)<0$ and that $g(z)$ is concave when $\sin(z)>0$.

\begin{lemma}\label{lemma.e1}
If $i\geq1$ is an odd integer, then $0<\eps_i<2\,\e^{-\zeta_i}$.
\end{lemma}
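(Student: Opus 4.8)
The plan is to convert the qualitative picture of $g(z)=\cos(z)\cosh(z)$ near its zero $\zeta_i=(i-\tfrac12)\pi$ into quantitative two-sided bounds on the shift $\eps_i$. First I would record the local data at $\zeta_i$. Since $\cos(\zeta_i)=0$ we have $g(\zeta_i)=0$, and from $g'(z)=-\sin(z)\cosh(z)+\cos(z)\sinh(z)$ we get $g'(\zeta_i)=-\sin(\zeta_i)\cosh(\zeta_i)$. For odd $i$ one checks $\sin(\zeta_i)=-\cos(i\pi)=-(-1)^i=1$, so $g'(\zeta_i)=-\cosh(\zeta_i)<0$; moreover, by the concavity criterion already noted (concave where $\sin>0$), $g$ is strictly concave on the interval $\bigl((i-1)\pi,\,i\pi\bigr)$, which for odd $i$ is exactly a positivity interval of $\sin$ and contains $\zeta_i$. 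Because $g(\zeta_i)=0>-1$ while $g$ is strictly decreasing to the right of $\zeta_i$ (its derivative only becomes more negative under concavity), the crossing $g(z_i)=-1$ must occur at some $z_i>\zeta_i$; this already yields the lower bound $\eps_i>0$.

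For the upper bound I would exploit concavity through the tangent line $L(z):=g(\zeta_i)+g'(\zeta_i)(z-\zeta_i)=-\cosh(\zeta_i)\,(z-\zeta_i)$ at $\zeta_i$. On the concave interval the graph of $g$ lies strictly below $L$ for $z>\zeta_i$, i.e.\ $g(z)<L(z)$. The tangent attains the value $-1$ at $z^\ast:=\zeta_i+1/\cosh(\zeta_i)$, and there $g(z^\ast)<L(z^\ast)=-1$. Since $g$ is decreasing with $g(z_i)=-1$, this forces $z_i<z^\ast$, that is $\eps_i<1/\cosh(\zeta_i)$. Finally $1/\cosh(\zeta_i)=2/\bigl(\e^{\zeta_i}+\e^{-\zeta_i}\bigr)<2\,\e^{-\zeta_i}$, which delivers exactly the claimed strict bound $\eps_i<2\,\e^{-\zeta_i}$.

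The step needing genuine care—and the main obstacle—is justifying that the concavity/tangent argument is valid all the way from $\zeta_i$ to the crossing point, i.e.\ that both $z_i$ and the auxiliary point $z^\ast$ stay inside $\bigl((i-1)\pi,\,i\pi\bigr)$, where $\sin>0$ keeps $g$ concave and monotone. The distance from $\zeta_i$ to the right endpoint $i\pi$ is $\pi/2$, so it suffices to verify $1/\cosh(\zeta_i)<\pi/2$; since $\zeta_i\ge\zeta_1=\pi/2$ and $\cosh$ is increasing, $1/\cosh(\zeta_i)\le 1/\cosh(\pi/2)\approx 0.40<\pi/2$, confirming that $z^\ast$, hence $z_i$, remains in the concave region for every odd $i\ge1$. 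I would establish this containment first, so that the concavity of $g$ is genuinely available on the whole range used in the tangent estimate.
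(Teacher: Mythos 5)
Your argument is correct and is essentially the paper's own proof: negative slope $g'(\zeta_i)=-\cosh(\zeta_i)$ gives $\eps_i>0$, and concavity places $g$ below its tangent at $\zeta_i$, whose intersection with $y=-1$ at $\zeta_i+1/\cosh(\zeta_i)$ yields $\eps_i<1/\cosh(\zeta_i)<2\,\e^{-\zeta_i}$. Your extra check that $1/\cosh(\zeta_i)<\pi/2$, so the tangent comparison stays inside the concavity window, is a small refinement the paper leaves implicit.
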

\begin{proof}
If $i$ is odd, then $g'(\zeta_i)=-\cosh(\zeta_i)$. Hence $g$ has a negative
slope at $\zeta_i$ and therefore $\eps_i>0$. Since $g$ is concave in the interval
$\bigl(\zeta_i-\frac{\pi}{2},\zeta_i+\frac{\pi}{2}\bigr)$, it follows that in
this interval the tangent to~$g$ at $\zeta_i$ is above~$g$. This tangent
intersects the line $y=-1$ at $\zeta_i+1/\cosh(\zeta_i)$, which yields the
desired upper bound on~$\eps_i$:
\[
  \eps_i < \frac{1}{\cosh(\zeta_i)} = \frac{2}{\e^{\zeta_i} + \e^{-\zeta_i}} < \frac{2}{\e^{\zeta_i}}.
  \qedhere
\]
\end{proof}

\begin{lemma}\label{lemma.e2}
If $i\geq2$ is an even integer, then $0<-\eps_i<4\,\e^{-\zeta_i}$.
\end{lemma}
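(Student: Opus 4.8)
The plan is to follow the geometry of the proof of Lemma~\ref{lemma.e1}, but to account for the fact that the roles of convexity and concavity are now reversed. For even $i$ we have $\sin(\zeta_i)=-1$, hence $g'(\zeta_i)=+\cosh(\zeta_i)>0$ and $g$ is \emph{convex} on the interval $(\zeta_i-\tfrac{\pi}{2},\zeta_i+\tfrac{\pi}{2})$. Since $g(\zeta_i)=0>-1$ while $g$ passes through $\zeta_i$ with positive slope, the line $y=-1$ can only be met to the \emph{left} of $\zeta_i$; this already gives $\eps_i<0$, i.e.\ $-\eps_i>0$. The essential difference from Lemma~\ref{lemma.e1} is that convexity now puts the tangent at $\zeta_i$ \emph{below} the graph of $g$, so the one-line tangent argument only delivers the \emph{lower} bound $-\eps_i>1/\cosh(\zeta_i)$ and says nothing about an upper bound. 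To bound $-\eps_i$ from above I must quantify the curvature.

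Writing $\delta:=-\eps_i>0$ so that $z_i=\zeta_i-\delta$, I would expand $g$ about $\zeta_i$ with Lagrange remainder. From $g(\zeta_i)=0$, $g'(\zeta_i)=\cosh(\zeta_i)$ and $g(z_i)=-1$ one obtains
\begin{equation*}
  \delta\cosh(\zeta_i)=1+\tfrac12\delta^2 g''(\xi),\qquad \xi\in(z_i,\zeta_i).
\end{equation*}
Since $g''(\xi)=-2\sin(\xi)\sinh(\xi)\le 2\sinh(\zeta_i)<\e^{\zeta_i}$ (using $|\sin(\xi)|\le1$ and the monotonicity of $\sinh$ with $\xi<\zeta_i$), dividing by $\cosh(\zeta_i)$ and inserting $1/\cosh(\zeta_i)<2\e^{-\zeta_i}$ and $\e^{\zeta_i}/\cosh(\zeta_i)<2$ yields the self-referential estimate
\begin{equation*}
  \delta<2\,\e^{-\zeta_i}+\delta^2 .
\end{equation*}
Provided I can secure the crude a~priori bound $\delta\le\tfrac12$, the term $\delta^2\le\tfrac12\delta$ is absorbed on the left and gives $\tfrac12\delta<2\,\e^{-\zeta_i}$, that is, $-\eps_i=\delta<4\,\e^{-\zeta_i}$, which is the claim.

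The remaining and, in my view, most delicate step is the bootstrap bound $\delta<\tfrac12$. I would establish it by inspecting $g$ on $[\zeta_i-\tfrac12,\zeta_i]$ directly: with $\cos(\zeta_i-s)=-\sin(s)$ and $\sin(\zeta_i-s)=-\cos(s)$ for even $i$, one has $g'(\zeta_i-s)=\cos(s)\cosh(\zeta_i-s)-\sin(s)\sinh(\zeta_i-s)>0$ for $s\in[0,\tfrac12]$ (because $\cos s>\sin s$ and $\cosh>\sinh$ there), so $g$ is strictly monotone on this interval, and $g(\zeta_i-\tfrac12)=-\sin(\tfrac12)\cosh(\zeta_i-\tfrac12)<-1$ since $\zeta_i\ge\tfrac{3\pi}{2}$ makes $\cosh(\zeta_i-\tfrac12)$ large. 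Together with $g(\zeta_i)=0$ this confines the crossing $z_i$ to $(\zeta_i-\tfrac12,\zeta_i)$, i.e.\ $\delta<\tfrac12$, and closes the argument. The genuine obstacle is therefore the interaction between the quadratic remainder and this bootstrap: the curvature bound must be kept tight enough (the clean factor $\e^{\zeta_i}/\cosh(\zeta_i)<2$ is what does this) so that the leftover $\delta^2$ is swallowed by the a~priori estimate while still leaving exactly the stated constant~$4$.
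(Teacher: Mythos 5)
Your proof is correct, but it follows a genuinely different route from the paper's. The paper does not expand $g$ at all: it simply guesses the test point $\xi_i=\zeta_i-4\,\e^{-\zeta_i}$, uses the exact identity $-g(\zeta_i-s)=\sin(s)\cosh(\zeta_i-s)$ to write $-g(\xi_i)$ in closed form, bounds it below by the single-variable function $h(x)=\bigl(\sin(4x)/(2x)\bigr)\e^{-4x}$ with $x=\e^{-\zeta_i}$, and checks that $h>1$ on the relevant (tiny) interval; convexity then places $z_i$ between $\xi_i$ and $\zeta_i$. You instead run a second-order Taylor expansion with Lagrange remainder, bound the curvature by $g''(\xi)\le 2\sinh(\zeta_i)<\e^{\zeta_i}$, obtain the self-referential inequality $\delta<2\,\e^{-\zeta_i}+\delta^2$, and close it with the a~priori bound $\delta<\tfrac12$ proved by a separate monotonicity computation on $[\zeta_i-\tfrac12,\zeta_i]$. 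All of your individual steps check out: the identities $\cos(\zeta_i-s)=-\sin(s)$, $\sin(\zeta_i-s)=-\cos(s)$ for even $i$ are right, $\e^{\zeta_i}/\cosh(\zeta_i)<2$ and $1/\cosh(\zeta_i)<2\,\e^{-\zeta_i}$ do the absorption exactly as you say, and $g(\zeta_i-\tfrac12)=-\sin(\tfrac12)\cosh(\zeta_i-\tfrac12)<-1$ holds comfortably since $\zeta_i\ge\tfrac{3\pi}{2}$. What each approach buys: the paper's argument is shorter and avoids the bootstrap, but it requires knowing the constant $4$ in advance and hinges on a numerically verified monotonicity claim about $h$; yours is more systematic --- the constant $4$ emerges from the computation rather than being posited --- at the cost of the extra a~priori step. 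Both share the same (implicit, and equally unaddressed in the paper) identification of the $i$-th root $z_i$ of $f$ with the unique crossing of $y=-1$ adjacent to $\zeta_i$, so you are not missing anything the paper supplies.
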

\begin{proof}
In this case, $g'(\zeta_i)=\cosh(\zeta_i)$. Hence $g$ is increasing, which
means that its graph intersects $y=-1$ left to $\zeta_i$, thus $\eps_i<0$.
Moreover, $g$ is convex in the interval
$\bigl(\zeta_i-\frac{\pi}{2},\zeta_i+\frac{\pi}{2}\bigr)$, and therefore the
tangent to~$g$ at $\zeta_i$ is below~$g$. Unfortunately, the intersection
between this tangent and $y=-1$ does not deliver an upper bound on~$|\eps_i|$.

Instead, we define $\xi_i := \zeta_i-4\,\e^{-\zeta_i}$ and show that $g(\xi_i)<-1$,
yielding the claimed bound on~$\eps_i$. Equivalently, we show $-g(\xi_i)>1$:
\begin{align*}
-g(\xi_i) &= \sin\bigl(4\,\e^{-\zeta_i}\bigr) \cosh(\xi_i) \\
 &= \tfrac12 \sin\bigl(4\,\e^{-\zeta_i}\bigr) \bigl(\e^{\xi_i}+\e^{-\xi_i}\bigr) \\
 &> \tfrac12 \sin\bigl(4\,\e^{-\zeta_i}\bigr) \, \e^{\xi_i} \\
 &= \tfrac12 \sin\bigl(4\,\e^{-\zeta_i}\bigr) \, \e^{\zeta_i} \exp\bigl(-4\,\e^{-\zeta_i}\bigr) \\
 &= \frac{\sin(4x)}{2x} \, \e^{-4x} \quad\text{with } x = \e^{-\zeta_i}.
\end{align*}
The function $h(x)= \bigl(\sin(4x)/(2x)\bigr) \e^{-4x}$ is monotonically
decreasing in the interval $\bigl(0,\e^{-\zeta_1}\bigr)$ with $h(0)=2$ and
$h\bigl(\e^{-\zeta_2}\bigr)\approx 1.92899$. In particular, we have $h(x)>1$
in this interval and therefore $h(\e^{-\zeta_i})>1$ for all~$i\geq2$, which
implies our claim on $g(\xi_i)$. 
\end{proof}

\begin{remark}
The factor~$4$ in the previous lemma is because of our crude estimate;
actually we have for even and odd~$i$
\[
  |\eps_i| \sim 2\,\e^{-\zeta_i}\quad\text{for } i\to\infty.
\]
\end{remark}

\begin{remark}
Analogous statements can be made about the negative roots of the
function~$f(z)$; they follow immediately by symmetry since $f$ is an even
function.
\end{remark}

Instead of a bound on~$\eps_i$, we can also derive an exact expression for it in
the form of an infinite series.  Plugging the representation~\eqref{eq.zn}
into the equation $f(z_i)=0$, one obtains
\[
  (-1)^i \sin(\eps_i) \cosh\bigl(\bigl(i - \tfrac12\bigr)\pi + \eps_i\bigr) = -1,
\]
or equivalently
\[
  \sin(\eps_i) + (-1)^i \frac{2}{w_i^{-1}+w_i} = 0
  \qquad\text{with }
  w_i = \exp\bigl(-\bigl(i - \tfrac12\bigr)\pi - \eps_i\bigr).
\]
We expand the left-hand side as a geometric series in~$w_i^2$, which gives
\[
  \sin(\eps_i) + 2\, (-1)^i \sum_{k=0}^\infty (-1)^k w_i^{2k+1}.
\]
Next, we write $(-1)^i w_i=x_i\cdot\exp(-\eps_i)$ with
$x_i=(-1)^i\exp\bigl(-\bigl(i + \tfrac12\bigr)\pi\bigr)$,
and perform Taylor expansion with respect to~$\eps_i$:
\[
  \sum_{j=0}^\infty \frac{(-1)^j}{(2j+1)!} \eps_i^{2j+1} +
  2 \sum_{k=0}^\infty (-1)^k x_i^{2k+1} \sum_{j=0}^\infty \frac{(-2k-1)^j}{j!} \eps_i^j.
\]
Formally speaking, this is a bivariate power series in the variables~$x_i$ and~$\eps_i$.
Making an ansatz for~$\eps_i$, i.e., substituting for $\eps_i$ a power series in~$x_i$ with
undetermined coefficients,
\[
  \eps_i = \sum_{i=1}^\infty a_i x_i^i,
\]
we obtain a univariate series:
\begin{multline*}
  (a_1 + 2) x_i
  + (a_2 - 2a_1) x_i^2
  + \left(a_3 - 2a_2 - \tfrac{1}{6}a_1^3 + a_1^2 - 2\right) x_i^3 \\
  + \left(a_4 - 2a_3 - \tfrac{1}{2}a_1^2a_2 + 2a_1a_2 - \tfrac{1}{3}a_1^3 + 6 a_1\right) x_i^4
  + \dots
\end{multline*}
Coefficient comparison with respect to~$x_i$ then allows us to compute the
unknown coefficients~$a_i$; note that in the coefficient of~$x_i^k$ the
indeterminate~$a_k$ appears linearly and can therefore be easily computed
from the previous ones:
\[
  a_1 = -2,\; a_2 = -4,\; a_3 = -\tfrac{34}{3},\; a_4 = -\tfrac{112}{3},\;
  a_5 = -\tfrac{2006}{15},\; a_6 = -\tfrac{1516}{3},\; \dots
\]

\begin{remark}
We have computed the first 100 coefficients $a_i$ symbolically, but we were
not able to identify a nice closed form for them. They do not satisfy a (nice)
linear recurrence equation with polynomial coefficients, either.  Also in the
OEIS~\cite{Sloane}, we could not find any information about these numbers.
\end{remark}


\subsection{Eigenfunctions}

With the acquired knowledge on the eigenvalues $\lambda$ of the operator $(J^2)^*J^2$,
we are able to derive the corresponding eigenfunctions $u_i$, at least numerically.
Recall the fundamental system
\[
  \gamma_1\, {\exp}{\left(\frac{t}{\lambda^{1/4}}\right)} +
  \gamma_2\, {\exp}{\left(-\frac{t}{\lambda^{1/4}}\right)} +
  \gamma_3\, {\sin}{\left(\frac{t}{\lambda^{1/4}} \right)} +
  \gamma_4\, {\cos}{\left(\frac{t}{\lambda^{1/4}} \right)}.
\]
By plugging the computed values for $\lambda_i$, $1\leq i\leq5$, into the
matrix $A_2(\lambda)$, we can determine the constants
$\gamma_1,\gamma_2,\gamma_3,\gamma_4$.  The results are shown in
Table~\ref{tab:gammas2} (after the normalization $\|u_i\|_{L^2([0,1])}=1$) and
the eigenfunctions themselves are plotted in Figure~\ref{fig:efs2}.

\begin{table}
\[
\begin{array}{l|rrrrr}
  i & \hfill\lambda_i\hfill & \hfill\gamma_1\hfill &
  \hfill\gamma_2\hfill & \hfill\gamma_3\hfill & \hfill\gamma_4\hfill \\ \hline
  1 & 0.0808907 & 0.1329522 & 0.8670478 & -0.7340955 & 1.0000000\rule{0pt}{12pt} \\
  2 & 0.0020597 & -0.0092337 & 1.0092340 & -1.0184670 & 1.0000000 \\
  3 & 0.0002627 & 0.0003878 & 0.9996122 & -0.9992245 & 1.0000000 \\
  4 & 0.0000684 & -0.0000168 & 1.0000170 & -1.0000340 & 1.0000000 \\
  5 & 0.0000250 & 0.0000007 & 0.9999993 & -0.9999986 & 1.0000000
\end{array}
\]
\caption{}
\label{tab:gammas2}
\end{table}

\begin{figure}
  \begin{center}
    \includegraphics[width=0.8\textwidth]{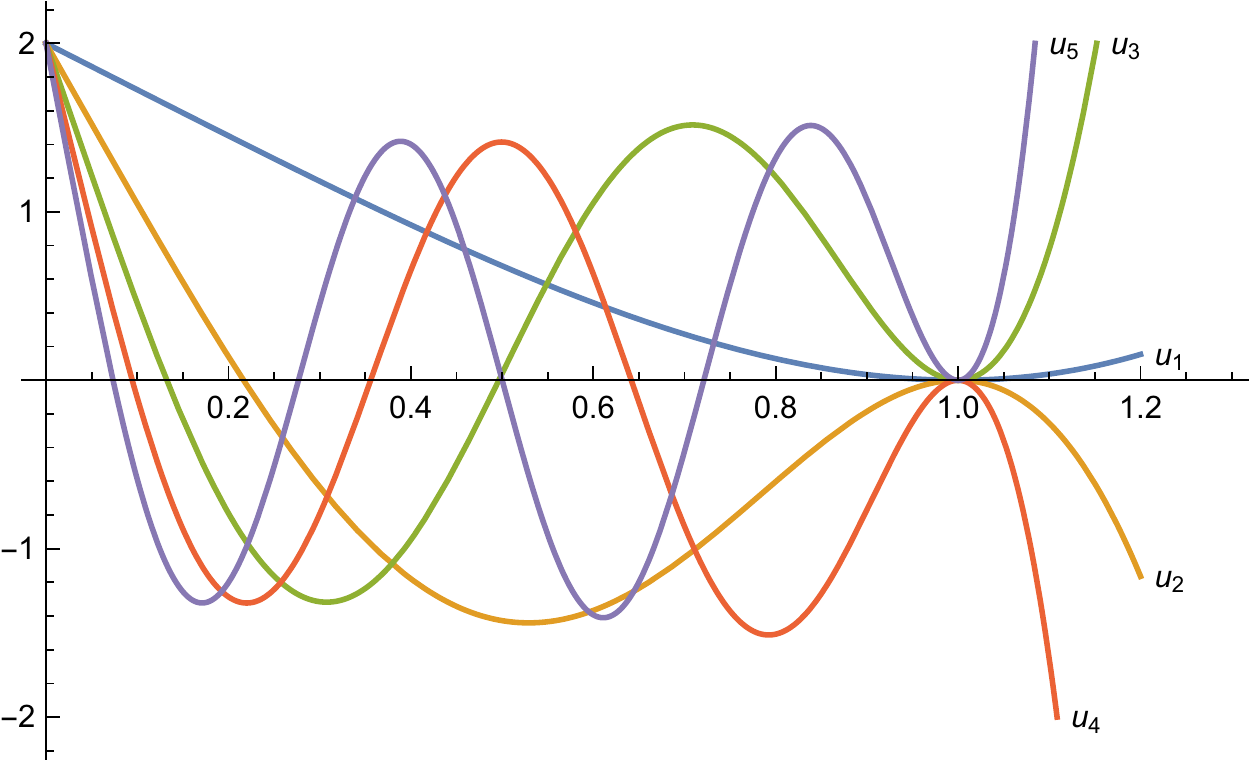}
  \end{center}
  \caption{Eigenfunctions $u_1,\dots,u_5$ for $n=2$}
  \label{fig:efs2}
\end{figure}

\section{The $n$-fold integration operator} \label{se6}

With the notation $\omega_k=\exp\bigl(\frac{i\pi}{2n}(2k+[n]_2)\bigr)$, the
roots $\nu_k$ of the characteristic polynomial~\eqref{char_pol},
$p_n(\nu)=\lambda\nu^{2n}+(-1)^{n+1}$, can be written as
$\nu_k=\lambda^{-\frac{1}{2n}}\omega_k$, $k=0,\dots,2n-1$. Let
$z=\lambda^{-\frac{1}{2n}}$ and write the fundamental system of the ODE
in~\eqref{eq:nfoldode} in terms of the complex exponential functions
$\varphi_k(t)=e^{\omega_kzt}$, then the matrix $A_n$ given
in~\eqref{eq:matrix_An} attains the following form
\[
  A_n(z) = \Bigl(a^{(n)}_{j,k}(z)\Bigr)_{0\leq j,k\leq 2n-1}
  \quad\text{with}\quad
  a^{(n)}_{j,k}(z) = \begin{cases}
    \omega_k^j z^j e^{\omega_kz}, & j<n, \\
    \omega_k^j z^j, & j\geq n.
  \end{cases}
\]
We want to determine the values of $z\neq0$ for which $\det(A_n(z))=0$. Hence
the common factor $z^j$ from the $j$-th row can be removed.  In order to
obtain an explicit expression for the determinant of $A_n(z)$, we first study
the more general matrix
\[
  M_n = \Bigl(m^{(n)}_{j,k}\Bigr)_{0\leq j,k\leq 2n-1}
  \quad\text{with}\quad
  m^{(n)}_{j,k} = \begin{cases} \omega_k^jz_k, & j<n, \\ \omega_k^j, & j\geq n, \end{cases}
\]
where $z_0,\dots,z_{2n-1}$ are indeterminates. The matrix looks as follows:
\[
  M_n = \begin{pmatrix}
    z_0 & z_1 & z_2 & \cdots & z_{2n-1} \\
    \omega_0z_0 & \omega_1z_1 & \omega_2z_2 & \cdots & \omega_{2n-1}z_{2n-1} \\
    \omega_0^2z_0 & \omega_1^2z_1 & \omega_2^2z_2 & \cdots & \omega_{2n-1}^2z_{2n-1} \\
    \vdots & \vdots & \vdots & & \vdots \\
    \omega_0^{n-1}z_0 & \omega_1^{n-1}z_1 & \omega_2^{n-1}z_2 & \cdots & \omega_{2n-1}^{n-1}z_{2n-1} \\
    \omega_0^n & \omega_1^n & \omega_2^n & \cdots & \omega_{2n-1}^n \\
    \vdots & \vdots & \vdots & & \vdots \\
    \omega_0^{2n-1} & \omega_1^{2n-1} & \omega_2^{2n-1} & \cdots & \omega_{2n-1}^{2n-1}
  \end{pmatrix}.
\]
For $z_0=\ldots=z_{2n-1}=1$, the matrix $M_n$ equals the Vandermonde matrix
$V(\omega_0,\dots,\omega_{2n-1})$, which for even $n$ is the Fourier matrix,
since in this case the $\omega_k$ are precisely the $2n$-th complex roots of
unity.

For symbolic $z_0,\dots,z_{2n-1}$, the determinant of $M_n$ is a polynomial in
$z_0,\dots,z_{2n-1}$ that is homogeneous of degree~$n$ and linear in each
variable~$z_k$. Let $I\subset\{0,1,\dots,2n-1\}$ be an index set with
$|I|=n$. We aim at computing the coefficient of the monomial $\prod_{k\in I}
z_k$ in $\det(M_n)$. This corresponds to setting $z_k=0$ for all $k\in
C:=\{0,1,\dots,2n-1\}\setminus I$. By permuting its columns, the matrix~$M_n$
can be transformed into a block matrix of the form
\[
  \left(\begin{array}{c|c}
    V\bigl((\omega_k)_{k\in I}\bigr) \cdot \operatorname{diag}\bigl((z_k)_{k\in I}\bigr) & 0 \\[1ex] \hline
    \ast\rule{0ex}{3ex} & V\bigl((\omega_k)_{k\in C}\bigr) \cdot \operatorname{diag}\bigl((\omega_k^n)_{k\in C}\bigr)
  \end{array}\right).
\]
Moving all columns with index in~$I$ to the first $n$ positions requires
$\sum_{k\in I}k-\frac12n(n-1)$ swaps of neighboring columns. Hence for the
coefficient of the monomial $\prod_{k\in I} z_k$ in $\det(M_n)$ one obtains:
\begin{equation}\label{eq:coeff}
  (-1)^{\sum_{k\in I}k-\frac12n(n-1)} \cdot \det V\bigl((\omega_k)_{k\in I}\bigr)
  \cdot \det V\bigl((\omega_k)_{k\in C}\bigr) \cdot \prod_{k\in C}\omega_k^n.
\end{equation}
From the definition of $\omega_k$ it follows immediately that
$\omega_k^n=(-1)^k$ if $n$ is even, and $\omega_k^n=i\cdot(-1)^k$ if $n$ is odd.
Hence the term $\prod_{k\in C}\omega_k^n$ turns into $i^{[n]_2\cdot n}\cdot(-1)^{\sum_{k\in C}k}$.
The sign in~\eqref{eq:coeff} can now be determined by the parity of
\begin{align*}
  \sum_{k\in I}k-\frac{n(n-1)}{2}+\sum_{k\in C}k
  &= \sum_{k=0}^{2n-1}k-\frac{n(n-1)}{2} \\
  &= \frac{2n(2n-1)}{2}-\frac{n(n-1)}{2} \\
  &= \frac{n(3n-1)}{2} = n(n-1) + \frac{n(n+1)}{2}.
\end{align*}
In addition, by employing the well-known formula for the Vandermonde
determinant, Equation~\eqref{eq:coeff} simplifies to
\begin{equation}\label{eq:cIn}
  (-1)^{n(n+1)/2} \, i^{[n]_2\cdot n} \,
  \Biggl(\prod_{\textstyle\genfrac{}{}{0pt}{}{k,\ell\in I}{k<\ell}}(\omega_\ell - \omega_k)\Biggr)
  \Biggl(\prod_{\textstyle\genfrac{}{}{0pt}{}{k,\ell\in C}{k<\ell}}(\omega_\ell - \omega_k)\Biggr)
  =: c_I^{(n)}.
\end{equation}
Hence the determinant of $M_n$ can be written as follows:
\begin{equation}\label{eq:detMn}
  \det(M_n) = \sum_{\textstyle\genfrac{}{}{0pt}{}{I\subset\{0,\dots,2n-1\}}{|I|=n}} c_I^{(n)}\cdot\prod_{k\in I}z_k.
\end{equation}
Note that for computational purposes the Vandermonde products in the
definition of $c_I^{(n)}$ can always be expressed in terms of $2n$-th roots of
unity, even if $n$ is odd. In this case one has to extract the factor
$i^{n-1}$ which simplifies in combination with the term~$i^{[n]_2\cdot n}$,
namely $i^{n-1}\cdot i^{[n]_2\cdot n}=i$.

\begin{thm}\label{thm:detAn}
Let $\omega_k=\exp\bigl(\frac{i\pi}{2n}(2k+[n]_2)\bigr)$ and $c_I^{(n)}$
as before, and define
$\alpha_I := \operatorname{Re}\bigl(\sum_{k\in I}\omega_k\bigr)$ and
$\beta_I := \operatorname{Im}\bigl(\sum_{k\in I}\omega_k\bigr)$.
Then
\begin{equation}\label{eq:detform}
  \det(A_n(z)) = z^{n(2n-1)} \; \cdot\hspace{-3ex}
  \sum_{\textstyle\genfrac{}{}{0pt}{}{I\subset\{0,\dots,2n-1\}}{|I|=n}} \hspace{-3ex}
  c_I^{(n)} \cosh(\alpha_I z)\cos(\beta_I z).
\end{equation}
\end{thm}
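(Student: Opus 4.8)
The plan is to reduce $\det(A_n(z))$ to the polynomial determinant $\det(M_n)$ already evaluated in \eqref{eq:detMn}, and then to reorganise the resulting sum of exponentials into the stated $\cosh$--$\cos$ form by exploiting the symmetries of the roots $\omega_k$.

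First I would factor the determinant. Every entry in row $j$ of $A_n(z)$ carries the factor $z^j$, so pulling these out of the rows contributes $\prod_{j=0}^{2n-1}z^j=z^{n(2n-1)}$ and leaves exactly the matrix $M_n$ with the specialisation $z_k=e^{\omega_k z}$. Hence \eqref{eq:detMn} yields
\[
  \det(A_n(z))=z^{n(2n-1)}\sum_{|I|=n}c_I^{(n)}\prod_{k\in I}e^{\omega_k z}
  =z^{n(2n-1)}\sum_{|I|=n}c_I^{(n)}\,e^{s_I z},
\]
where $s_I:=\sum_{k\in I}\omega_k=\alpha_I+i\beta_I$. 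It remains to prove that $\sum_{|I|=n}c_I^{(n)}e^{s_I z}=\sum_{|I|=n}c_I^{(n)}\cosh(\alpha_I z)\cos(\beta_I z)$.

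Next I would group the index sets into orbits. Two commuting involutions act on $\{0,\dots,2n-1\}$: the negation $\rho(k)=k+n \bmod 2n$, for which $\omega_{\rho(k)}=-\omega_k$, and the conjugation $\tau$ with $\omega_{\tau(k)}=\overline{\omega_k}$ furnished by Proposition~\ref{rootsandconjugates}. They induce an action on the $n$-element index sets whose generic orbit is $\{I,\overline I,-I,-\overline I\}$ (with $\overline I:=\tau(I)$, $-I:=\rho(I)$), on which $s$ takes the four values $s_I,\overline{s_I},-s_I,-\overline{s_I}$. Since $\tfrac14(e^{s_I z}+e^{\overline{s_I}z}+e^{-s_I z}+e^{-\overline{s_I}z})=\cosh(\alpha_I z)\cos(\beta_I z)$ and the right-hand side is constant along the orbit (cosh and cos are even), summing orbit by orbit reproduces exactly the claimed sum, \emph{provided} $c_I^{(n)}$ is constant on each orbit; by linear independence of the distinct exponentials $e^{s_I z}$ this invariance is moreover necessary. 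Thus the theorem reduces to $c_I^{(n)}=c_{-I}^{(n)}=c_{\overline I}^{(n)}=c_{-\overline I}^{(n)}$.

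Finally I would verify these invariances from \eqref{eq:cIn}, writing $c_I^{(n)}=K\,W(I)\,W(C)$ with $K=(-1)^{n(n+1)/2}i^{[n]_2 n}$, $C=\{0,\dots,2n-1\}\setminus I$, and $W(S)=\prod_{k<\ell,\,k,\ell\in S}(\omega_\ell-\omega_k)$. Under $\rho$ each Vandermonde factor $\omega_\ell-\omega_k$ changes sign (from $\omega\mapsto-\omega$) and the factors are reordered, giving $c_{-I}^{(n)}=\sgn(\rho|_I)\,\sgn(\rho|_C)\,c_I^{(n)}$; writing $a:=|I\cap\{0,\dots,n-1\}|$, the reordering moves the $a$ lower indices of $I$ past its $n-a$ upper ones, so $\sgn(\rho|_I)=\sgn(\rho|_C)=(-1)^{a(n-a)}$ and their product is $+1$, whence $c_{-I}^{(n)}=c_I^{(n)}$ (and likewise $c_{-\overline I}^{(n)}=c_{\overline I}^{(n)}$). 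This leaves the single conjugation identity $c_{\overline I}^{(n)}=c_I^{(n)}$. Here the same bookkeeping only gives $c_{\overline I}^{(n)}=\sgn(\tau|_I)\,\sgn(\tau|_C)\,(K/\overline K)\,\overline{c_I^{(n)}}$, because conjugation returns $\overline{W(I)}$ rather than $\pm W(I)$; concluding $c_{\overline I}^{(n)}=c_I^{(n)}$ therefore requires pinning down the \emph{argument} of $c_I^{(n)}$ --- equivalently, showing $\arg\bigl(W(I)W(C)\bigr)$ is independent of $I$ --- via the chord identity $\omega_\ell-\omega_k=2i\sin\!\tfrac{\theta_\ell-\theta_k}{2}\,e^{i(\theta_\ell+\theta_k)/2}$ for $\omega_m=e^{i\theta_m}$. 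I expect this phase computation to be the main obstacle. The remaining cases are routine: orbits with $s_I=0$ satisfy the identity term by term since then $e^{s_I z}=1=\cosh(0)\cos(0)$, and orbits shortened by a fixed point of an involution need only the symmetries that survive. A convenient consistency check is that the determinant of the real-fundamental-system matrix \eqref{eq:matrix_An} is real for real $z$ and differs from $\det(A_n(z))$ only by a constant factor, in agreement with $c^{(n)}$ being constant on orbits.
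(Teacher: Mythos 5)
Your proposal follows the paper's own proof almost verbatim in its first two steps: factor $z^{n(2n-1)}$ out of the rows, specialize \eqref{eq:detMn} at $z_k=e^{\omega_k z}$, and regroup the exponentials $e^{s_I z}$ in fours (or twos) using the evenness of $\cosh$ and $\cos$ --- the paper pairs $I$ with its complement $C$ (for which $s_C=-s_I$) where you use $\rho(I)$, but that difference is immaterial. The one step you leave open, the conjugation invariance $c_{\bar I}^{(n)}=c_I^{(n)}$, is exactly the step the paper dismisses with ``obvious from symmetry arguments''; your sign computation establishing $c_{\rho(I)}^{(n)}=c_I^{(n)}$ is correct and already more than the paper supplies. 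The missing step does close along the lines you sketch, and quickly: with $\omega_m=e^{i\theta_m}$ and $\omega_\ell-\omega_k=2i\sin\frac{\theta_\ell-\theta_k}{2}\,e^{i(\theta_k+\theta_\ell)/2}$ one gets $W(I)W(C)=(2i)^{n(n-1)}\,P_I\,\exp\bigl(\tfrac{i(n-1)}{2}\sum_{k=0}^{2n-1}\theta_k\bigr)$, because each $\theta_k$ occurs in exactly $n-1$ pairs within its own block; the phase factor is therefore independent of $I$, while $P_I=\prod_{k<\ell\in I}\sin\frac{\theta_\ell-\theta_k}{2}\cdot\prod_{k<\ell\in C}\sin\frac{\theta_\ell-\theta_k}{2}$ is a positive real number that is invariant under $\theta\mapsto 2\pi-\theta$, i.e.\ under $I\mapsto\bar I$ (and under $\rho$ as well). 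Hence all $c_I^{(n)}$ lie on a single ray through the origin and every invariance you need follows at once, so your ``main obstacle'' is in fact a two-line computation. One small inaccuracy to fix: the orbit-invariance of $c_I^{(n)}$ is sufficient for the identity but is \emph{not} forced by linear independence of the exponentials, since distinct index sets can share the same exponent $s_I$ (cf.\ the caption of Figure~\ref{fig:rootsums}); linear independence only pins down the sums of the $c_I^{(n)}$ over such collision classes.
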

\begin{proof}
We show that $\det(A_n(z))$ is basically a special case of~\eqref{eq:detMn} (up
to the factor $z^{n(2n-1)}$ which comes from the common row factors~$z^j$ that
were omitted in the definition of~$M_n$).  Recall that the variables $z_k$ in
the matrix~$M_n$ have replaced the exponential functions $e^{\omega_kz}$ that
appear in the matrix~$A_n(z)$. Thus, with $z_k=e^{\omega_k z}$ one obtains
\[
  \prod_{k\in I}z_k = {\exp}{\biggl(z\cdot\sum_{k\in I}\omega_k\biggr)} =
  \exp\bigl(z\cdot(\alpha_I+i\beta_I)\bigr).
\]
As before, let $C:=\{0,\dots,2n-1\}\setminus I$ be the set complement of~$I$.
From $\sum_{k\in I} \omega_k + \sum_{k\in C} \omega_k = \sum_{k=0}^{2n-1} \omega_k = 0$,
it follows immediately that
\[
  \alpha_C = -\alpha_I \quad\text{and}\quad \beta_C = -\beta_I.
\]
Moreover, define
\[
  \bar{I} := \{2n-[n]_2-k \mathrel{\mathrm{mod}} 2n \mid k\in I\},
\]
which corresponds to reflecting the set $\{\omega_k\}_{k\in I}$ across the real
axis. Then it follows that
\[
  \alpha_{\bar{I}} = \alpha_I \quad\text{and}\quad \beta_{\bar{I}} = -\beta_I.
\]
Apparently, for those sets~$I$ for which $I=\bar{I}$ holds, one has
$\beta_I=0$.  In such cases, we can combine the exponentials that correspond
to~$I$ and to~$C$, using the identity
\[
  e^{\alpha z} + e^{-\alpha z} = 2\cosh(\alpha z).
\]
Otherwise, if $I\neq\bar{I}$, then $I,\bar{I},C,\bar{C}$ are four pairwise
distinct sets, and their corresponding exponentials can be combined as follows:
\[
  e^{(\alpha+i\beta)z} + e^{(\alpha-i\beta)z} + e^{(-\alpha+i\beta)z} + e^{(-\alpha-i\beta)z} =
  4\cosh(\alpha z)\cos(\beta z).
\]
The asserted formula follows by observing that
$c_I^{(n)}=c_{\bar{I}}^{(n)}=c_{C}^{(n)}=c_{\bar{C}}^{(n)}$ (which is obvious
from symmetry arguments).
\end{proof}

\begin{xmpl}
For $n=2$, we deduce again the transcendental equation satisfied by
$z=\lambda^{-\frac14}$, but now using Theorem~\ref{thm:detAn}:
\begin{itemize}
\item For $I=\{0,2\}$ one has $I=\bar{I}$ and $C=\bar{C}=\{1,3\}$; then it
  follows that
  \begin{align*}
    c_I^{(n)} &= (-1)^3\cdot(\omega_2-\omega_0)\cdot(\omega_3-\omega_1)=-4i, \\
    \alpha_I &= \beta_I = 0.
  \end{align*}
\item For $I=\{0,1\}$ one has $\bar{I}=\{0,3\}$, $C=\{2,3\}$ and
  $\bar{C}=\{1,2\}$; then it follows that
  \begin{align*}
    c_I^{(n)} &= -(\omega_1-\omega_0)(\omega_3-\omega_2)=-(i-1)(-i+1)=-2i,\\
    \alpha_I &= \operatorname{Re}(\omega_0+\omega_1)=1,\\
    \beta_I &= \operatorname{Im}(\omega_0+\omega_1)=1.
  \end{align*}
\end{itemize}
Summing over all possible subsets $I\subset\{0,1,2,3\}$, we get
\begin{multline*}
  2\cdot (-4i)\cosh(0)\cos(0) + 4\cdot(-2i)\cosh(z)\cos(z) = \\
  {-8i}\cdot\bigl(1+\cosh(z)\cos(z)\bigr),
\end{multline*}
which is in accordance with our previous result
(Proposition~\ref{pro:lambda}).
\end{xmpl}

\begin{corollary}\label{cor:eq3}
In the case $n=3$, i.e., for the threefold integration operator $J^3$, the
eigenvalues $\lambda$ of the operator $(J^3)^*J^3$ are the solutions of the
nonlinear transcendental equation
\begin{multline*}
  8\cos\bigl(\lambda^{-\frac16}\bigr) + \cos\bigl(2\lambda^{-\frac16}\bigr)
  +2\cos\bigl(\lambda^{-\frac16}\bigr) \cosh\bigl(\sqrt{3}\lambda^{-\frac16}\bigr) \\
  +16\cos\bigl(\tfrac{1}{2}\lambda^{-\frac16}\bigr)
    \cosh\bigl(\tfrac{\sqrt{3}}{2}\lambda^{-\frac16}\bigr) + 9 = 0.
\end{multline*}
\end{corollary}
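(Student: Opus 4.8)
The plan is to specialize Theorem~\ref{thm:detAn} to $n=3$ and to collect the resulting terms. Setting $z=\lambda^{-1/6}$, the condition $\det(A_3(z))=0$ is equivalent (since $z\neq0$, the prefactor $z^{n(2n-1)}=z^{15}$ may be discarded) to the vanishing of the sum in~\eqref{eq:detform}. For $n=3$ one has $[n]_2=1$, so the roots are $\omega_k=\exp\!\bigl(\frac{i\pi}{6}(2k+1)\bigr)$, $k=0,\dots,5$, i.e.\ the six twelfth roots of unity situated at the odd multiples of $\pi/6$; explicitly $\omega_0=\tfrac{\sqrt3}{2}+\tfrac i2$, $\omega_1=i$, $\omega_2=-\tfrac{\sqrt3}{2}+\tfrac i2$, while $\omega_3,\omega_4,\omega_5$ are their reflections across the real axis.

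First I would enumerate the $\binom{6}{3}=20$ index sets $I$ with $|I|=3$ and compute $\alpha_I=\operatorname{Re}\sum_{k\in I}\omega_k$ and $\beta_I=\operatorname{Im}\sum_{k\in I}\omega_k$ for each. Since $\cosh(\alpha_Iz)\cos(\beta_Iz)$ depends only on $(|\alpha_I|,|\beta_I|)$, the twenty summands collapse into exactly five distinct trigonometric--hyperbolic shapes, namely $\cos(2z)$, $\cos z$, the constant $1$, $\cosh(\tfrac{\sqrt3}{2}z)\cos(\tfrac z2)$, and $\cosh(\sqrt3\,z)\cos z$, arising from the value pairs $(0,2)$, $(0,1)$, $(0,0)$, $(\tfrac{\sqrt3}{2},\tfrac12)$, and $(\sqrt3,1)$, respectively.

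Next I would compute the coefficients $c_I^{(3)}$ from~\eqref{eq:cIn}. Here $(-1)^{n(n+1)/2}=1$ and $i^{[n]_2 n}=i^3=-i$, so $c_I^{(3)}=-i\,P_I\,P_C$ with $P_S:=\prod_{k<\ell,\,k,\ell\in S}(\omega_\ell-\omega_k)$ and $C=\{0,\dots,5\}\setminus I$. The symmetry relations $c_I^{(3)}=c_{\bar I}^{(3)}=c_C^{(3)}=c_{\bar C}^{(3)}$ established in the proof of Theorem~\ref{thm:detAn} reduce this to one representative per orbit; evaluating the Vandermonde products (a routine but lengthy complex-number computation, best delegated to computer algebra) I expect to obtain $c=3i$ for the $\cos(2z)$-orbit, $c=12i$ for each $\cos z$-orbit, $c=27i$ for the constant orbit, $c=12i$ for each $\cosh(\tfrac{\sqrt3}{2}z)\cos(\tfrac z2)$-orbit, and $c=3i$ for the $\cosh(\sqrt3 z)\cos z$-orbit. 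Summing over all twenty sets then produces the coefficients $6i,\,48i,\,54i,\,96i,\,12i$ for the five shapes, i.e.\ the common factor $6i$ multiplying $\cos(2z)+8\cos z+9+16\cosh(\tfrac{\sqrt3}{2}z)\cos(\tfrac z2)+2\cosh(\sqrt3 z)\cos z$. Cancelling $6i\,z^{15}\neq0$ yields precisely the stated equation with $z=\lambda^{-1/6}$.

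The main obstacle will be the careful bookkeeping of the orbit structure rather than any single hard estimate. For odd $n=3$ the reflection $k\mapsto 5-k\pmod 6$ pairs up all indices and has no fixed point, so (unlike the even case) there is no self-conjugate set $I=\bar I$; instead the degeneracy appears as orbits with $\alpha_I=0$, where $C=\bar I$ and the four-term combination $4\cosh(\alpha_Iz)\cos(\beta_Iz)$ from the theorem's proof collapses to a two-term $\cos$-only contribution. One must therefore check that formula~\eqref{eq:detform}, written as a sum over \emph{all} $I$, still absorbs these degenerate orbits correctly---and it does, because each exponential $e^{(\alpha_I+i\beta_I)z}$ in the expansion receives total weight equal to the corresponding $c_I^{(3)}$, regardless of whether the orbit $\{I,\bar I,C,\bar C\}$ consists of four or of only two distinct sets. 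Once this is verified, the remaining work is purely the complex arithmetic for the $P_I P_C$ products.
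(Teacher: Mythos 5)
Your proposal is correct and follows exactly the route the paper intends: the corollary is stated as a direct specialization of Theorem~\ref{thm:detAn} to $n=3$ (mirroring the worked example for $n=2$), and your enumeration of the twenty index sets, the five shape classes $(|\alpha_I|,|\beta_I|)\in\{(0,2),(0,1),(0,0),(\tfrac{\sqrt3}{2},\tfrac12),(\sqrt3,1)\}$ with multiplicities $2,4,2,8,4$, and the coefficients $3i,12i,27i,12i,3i$ all check out, yielding the common factor $6i$ and the stated equation. Your remark that for odd $n$ there is no self-conjugate set $I=\bar I$ and that the degenerate orbits are instead those with $\alpha_I=0$ (where $C=\bar I$) is a correct and worthwhile observation about how formula~\eqref{eq:detform} absorbs two-element orbits.
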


\begin{corollary}\label{cor:eq4}
In the case $n=4$, i.e., for the fourfold integration operator $J^4$, the
eigenvalues $\lambda$ of the operator $(J^4)^*J^4$ are the solutions of the
nonlinear transcendental equation (which is obtained from
Theorem~\ref{thm:detAn} after further simplifications):
\begin{align*}
  &\cos\bigl(\sqrt{2} \lambda^{-\frac18}\bigr)
  +\cosh\bigl(\sqrt{2} \lambda^{-\frac18}\bigr)
  +2 \cos\bigl(\lambda^{-\frac18}\bigr) \cosh\bigl(\lambda^{-\frac18}\bigr)
  \\
  &+3 \bigl(\cos\bigl(\sqrt{2} \lambda^{-\frac18}\bigr) + \cosh\bigl(\sqrt{2} \lambda^{-\frac18}\bigr)\bigr)
    \cos\bigl(\lambda^{-\frac18}\bigr) \cosh\bigl(\lambda^{-\frac18}\bigr)
  \\
  &+8 \bigl(\cos\bigl(\lambda^{-\frac18}\bigr)+\cosh\bigl(\lambda^{-\frac18}\bigr)\bigr)
    \cos\bigl(\tfrac{1}{\sqrt{2}}\lambda^{-\frac18}\bigr) \cosh\bigl(\tfrac{1}{\sqrt{2}}\lambda^{-\frac18}\bigr)
  \\
  &+4 \sqrt{2} \sin\bigl(\lambda^{-\frac18}\bigr) \sin\bigl(\tfrac{1}{\sqrt{2}}\lambda^{-\frac18}\bigr)
    \cosh\bigl(\tfrac{1}{\sqrt{2}}\lambda^{-\frac18}\bigr)
  \\
  &-4 \sqrt{2} \cos\bigl(\tfrac{1}{\sqrt{2}}\lambda^{-\frac18}\bigr)
    \sinh\bigl(\lambda^{-\frac18}\bigr) \sinh\bigl(\tfrac{1}{\sqrt{2}}\lambda^{-\frac18}\bigr)
  \\
  &+2 \sqrt{2} \sin\bigl(\lambda^{-\frac18}\bigr) \sin\bigl(\sqrt{2} \lambda^{-\frac18}\bigr)
    \cosh\bigl(\lambda^{-\frac18}\bigr)
  \\
  &-2 \sqrt{2} \cos\bigl(\lambda^{-\frac18}\bigr) \sinh\bigl(\lambda^{-\frac18}\bigr)
    \sinh\bigl(\sqrt{2} \lambda^{-\frac18}\bigr) + 6 = 0.
\end{align*}
\end{corollary}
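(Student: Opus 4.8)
The plan is to specialize Theorem~\ref{thm:detAn} to the case $n=4$ and then collapse the resulting sum into the compact form asserted above. Writing $z=\lambda^{-1/8}$, the relevant roots $\omega_k=\exp(i\pi k/4)$ are the $2n=8$-th roots of unity (here $[n]_2=0$), and the prefactor becomes $z^{n(2n-1)}=z^{28}$. Since $z\neq0$ for every positive eigenvalue~$\lambda$, the condition $\det(A_4(z))=0$ is equivalent to the vanishing of the sum in~\eqref{eq:detform}, so the task reduces to evaluating that sum explicitly.

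First I would enumerate the $\binom{8}{4}=70$ index sets $I\subset\{0,\dots,7\}$ with $|I|=4$, and for each compute $\alpha_I=\operatorname{Re}\sum_{k\in I}\omega_k$, $\beta_I=\operatorname{Im}\sum_{k\in I}\omega_k$, together with the coefficient $c_I^{(4)}$ from~\eqref{eq:cIn}; for $n=4$ the prefactor $(-1)^{n(n+1)/2}i^{[n]_2 n}$ equals~$1$, so $c_I^{(4)}$ is simply the product of the two Vandermonde determinants. These sets fall into orbits under the two involutions $I\mapsto\bar I$ and $I\mapsto C$ from the proof of Theorem~\ref{thm:detAn}, along which $\alpha_I$, $\beta_I$ and $c_I^{(4)}$ are constant up to the sign changes that $\cosh$ and $\cos$ absorb; a self-conjugate orbit ($\beta_I=0$) contributes a factor~$2$ and a generic orbit a factor~$4$, exactly as in the worked example for $n=2$. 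Carrying this out, one finds that the values $\alpha_I,\beta_I$ range over $\{0,\tfrac{1}{\sqrt2},1,\sqrt2,1\pm\tfrac{1}{\sqrt2},\sqrt2\pm1\}$ and that every coefficient $c_I^{(4)}$ turns out to be purely imaginary (for instance $c^{(4)}_{\{0,2,4,6\}}=256\,i$). Dividing~\eqref{eq:detform} by the common nonzero factor---the shared~$i$, the accumulated numeric constant, and $z^{28}$---therefore reduces the eigenvalue condition to a \emph{real} linear combination of products $\cosh(\alpha_I z)\cos(\beta_I z)$.

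The final step is to recognize the stated expression as a regrouping of this real linear combination. Terms sharing a common $\alpha_I$ but carrying two $\beta$-values $\beta\pm\gamma$ (and, symmetrically, terms with common $\beta_I$ and two $\alpha$-values $\alpha\pm\gamma$) are merged via
\[
  \cos A-\cos B=-2\sin\tfrac{A+B}{2}\sin\tfrac{A-B}{2},\qquad
  \cosh A-\cosh B=2\sinh\tfrac{A+B}{2}\sinh\tfrac{A-B}{2},
\]
together with their additive counterparts. It is precisely this merging that produces the mixed products such as $\sin(\lambda^{-1/8})\sin(\sqrt2\,\lambda^{-1/8})\cosh(\lambda^{-1/8})$ and $\cos(\tfrac{1}{\sqrt2}\lambda^{-1/8})\sinh(\lambda^{-1/8})\sinh(\tfrac{1}{\sqrt2}\lambda^{-1/8})$, as well as the factored blocks like $\bigl(\cos(\sqrt2\,\lambda^{-1/8})+\cosh(\sqrt2\,\lambda^{-1/8})\bigr)\cos(\lambda^{-1/8})\cosh(\lambda^{-1/8})$; this is exactly the ``further simplifications'' referred to in the statement.

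I expect the main obstacle to be one of scale and bookkeeping rather than of principle: handling the $70$ subsets, verifying that all $c_I^{(4)}$ are purely imaginary with the correct integer and $\sqrt2$ weights, and then selecting the particular regrouping that reproduces the printed expression. Because the simplified form is far from canonical, the honest route is to let a computer algebra system carry out the enumeration and the trigonometric/hyperbolic collection, as the authors indicate; the role of the hand argument is to justify that the output must have this shape---a real combination of one hyperbolic and one trigonometric factor at the frequencies $1,\tfrac{1}{\sqrt2},\sqrt2$ listed above---so that only the numerical weights remain to be read off from the computation.
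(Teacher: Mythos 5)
Your proposal is correct and follows essentially the same route as the paper: the corollary is obtained by specializing Theorem~\ref{thm:detAn} to $n=4$ (where $[n]_2=0$, the $\omega_k$ are the $8$th roots of unity, and the prefactor in \eqref{eq:cIn} is $1$), enumerating the $\binom{8}{4}=70$ subsets with their purely imaginary coefficients $c_I^{(4)}$, and letting a computer algebra system perform the product-to-sum regrouping; your spot-check $c^{(4)}_{\{0,2,4,6\}}=256\,i$ is also correct.
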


The transcendental equations given in Corollaries~\ref{cor:eq3}
and~\ref{cor:eq4} can be used to compute accurate approximations to the
eigenvalues~$\lambda_i$, using Newton's method, for example. In
Table~\ref{tab:zeros_3_4} the values for $\lambda_i^{-1/6}$
(resp.~$\lambda_i^{-1/8}$) for $i=1,\dots,5$ are displayed.  As in the
case $n=2$, one observes that the $i$-th value is close to
$(2i-1)\frac{\pi}{2}$ (we give an explanation of this
phenomenon in the next section). The corresponding eigenfunctions~$u_i$
are shown in Figures~\ref{fig:efs_3} and~\ref{fig:efs_4}.

\begin{table}
  \begin{tabular}{l|r@{.}lr@{.}l}
    & \multicolumn{2}{c}{$n=3$ (Cor.~\ref{cor:eq3})}
    & \multicolumn{2}{c}{$n=4$ (Cor.~\ref{cor:eq4})} \\ \hline
    $z_1$ & 2&2247729764011889 & 2&5902718684989891 \rule{0pt}{12pt} \\
    $z_2$ & 4&8026572459190195 & 5&0106222998859963 \\
    $z_3$ & 7&8476475910871745 & 7&8970686069935174 \\
    $z_4$ & 10&9951601546635699 & 10&9949247590502524 \\
    $z_5$ & \quad 14&1371941952108977 & \quad 14&1366518856561214 \rule[-12pt]{0pt}{12pt}
  \end{tabular}
  \caption{Numerical approximations of the first five zeros $z_i$ of the
    transcendental equations in Corollary~\ref{cor:eq3} (with
    $z=\lambda^{-1/6}$) and Corollary~\ref{cor:eq4} (with
    $z=\lambda^{-1/8}$)}
  \label{tab:zeros_3_4}
\end{table}

\begin{figure}
  \begin{center}
    \includegraphics[width=0.8\textwidth]{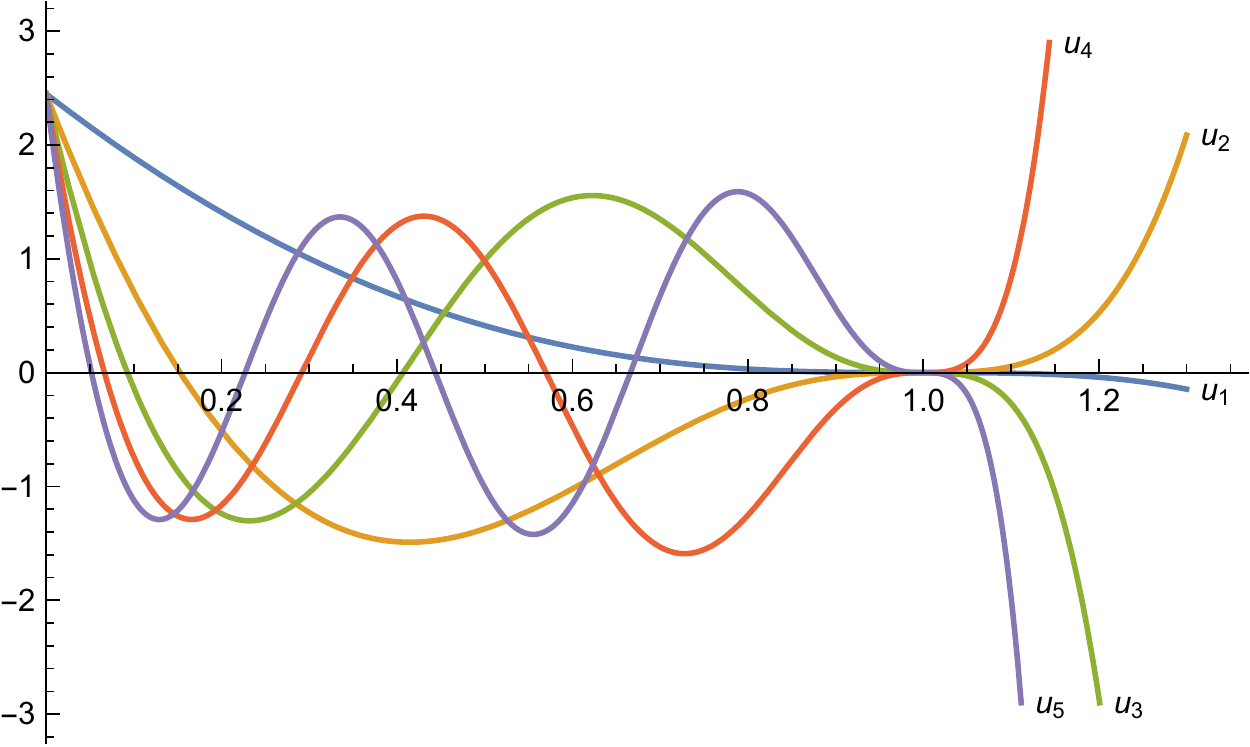}
  \end{center}
  \caption{Eigenfunctions $u_1,\dots,u_5$ for $n=3$}
  \label{fig:efs_3}
\end{figure}

\begin{figure}
  \begin{center}
    \includegraphics[width=0.8\textwidth]{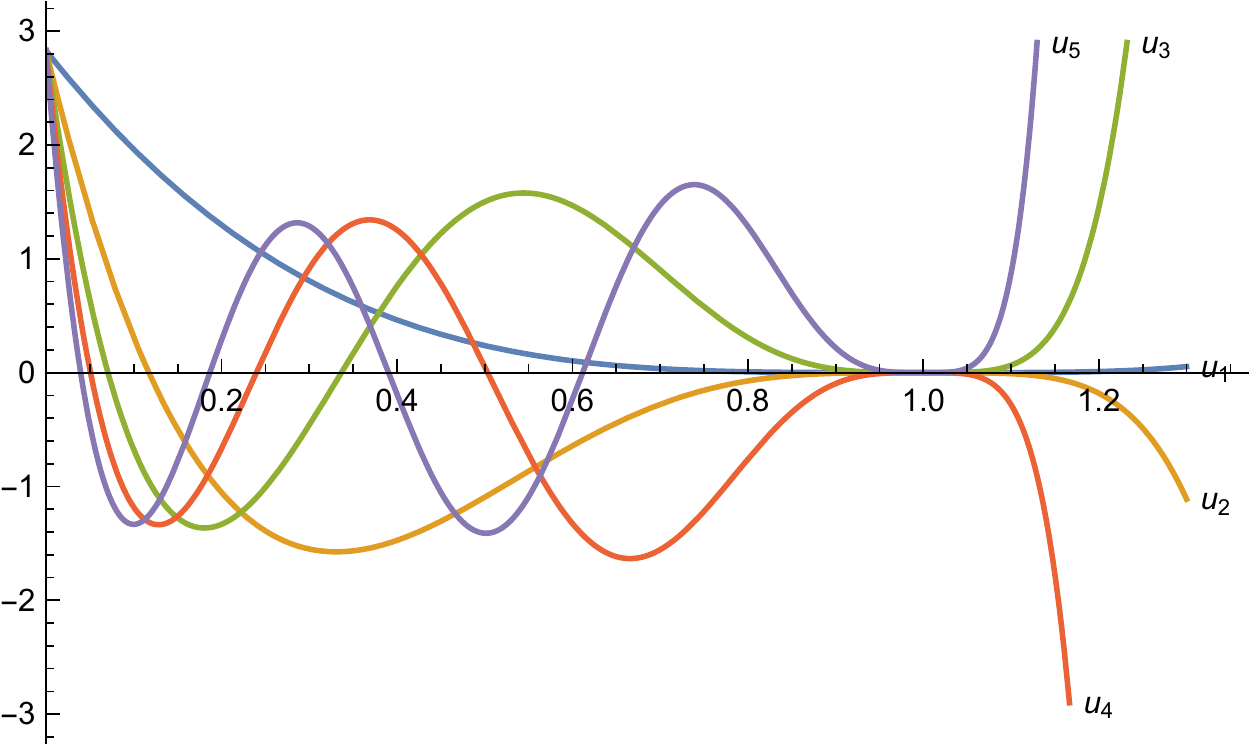}
  \end{center}
  \caption{Eigenfunctions $u_1,\dots,u_5$ for $n=4$}
  \label{fig:efs_4}
\end{figure}

\section{Distribution of the eigenvalues $\lambda_i$}

In Theorem~\ref{thm:detAn} we have stated that the determinant of $A_n(z)$ can
be written as a sum of expressions of the form $c\cdot\cosh(\alpha
z)\cdot\cos(\beta z)$ with $c\neq0$ and $\alpha,\beta\in\mathbb{R}$. The
location of the zeros of this determinant is mostly governed by the summand
whose $\cosh(\alpha z)$ term has the fastest asymptotic growth, i.e., whose
scaling factor~$\alpha$ is largest. Recall that the $\alpha$'s are obtained as
the real parts of sums (with $n$ summands) of different
$\omega_k=\exp\bigl(\frac{i\pi}{2n}(2k+[n]_2)\bigr)$. Figure~\ref{fig:rootsums}
shows all possible pairs $(\alpha,\beta)$ when $n=8$; note that there are
$\binom{16}{8}=12870$ sums, some of which add up to the same values.

If $n$ is even, then obviously $\{i,-i\}\subseteq\{\omega_k\mid
k=0,\dots,2n-1\}$.  Due to the ``correction term'' $[n]_2$, the same is
true for odd~$n$.  Hence, there are exactly $n-1$ numbers among the $\omega_k$
with strictly positive real part. By choosing them, plus one out of $\{i,-i\}$
(w.l.o.g. we choose~$i$), their sum will clearly exhibit the largest real part
among all $n$-sums of the~$\omega_k$.

We now study this sum in more detail. For even $n$ we obtain (by combining
pairs of complex conjugates):
\begin{align*}
  \sum_{k=-n/2+1}^{n/2} \!\!\!\! \omega_k
  &= \sum_{k=-n/2+1}^{n/2} \!\!\!\! \exp\bigl(i\tfrac{2\pi k}{2n}\bigr)
  = i + 1 + \! \sum_{k=1}^{n/2-1} 2\operatorname{Re}\bigl(\exp\bigl(i\tfrac{2\pi k}{2n}\bigr)\bigr) \\
  &= i + 1 + \! \sum_{k=1}^{n/2-1} 2\cos\bigl(\tfrac{2\pi k}{2n}\bigr)
  = i + \cot\bigl(\tfrac{\pi}{2n}\bigr).
\end{align*}
For odd $n$ we obtain
\begin{align*}
  \sum_{k=-(n-1)/2}^{(n-1)/2} \!\!\!\!\! \omega_k
  &= \sum_{k=-(n-1)/2}^{(n-1)/2} \!\!\!\!\! \exp\bigl(i\tfrac{2\pi k+\pi}{2n}\bigr)
  = i \, + \!\!\!\! \sum_{k=0}^{(n-1)/2-1} \!\!\!\! 2\operatorname{Re}\bigl(\exp\bigl(i\tfrac{2\pi k+\pi}{2n}\bigr)\bigr) \\
  &= i \, + \!\!\!\! \sum_{k=0}^{(n-1)/2-1} \!\!\!\! 2\cos\bigl(\tfrac{2\pi k+\pi}{2n}\bigr)
  = i + \cot\bigl(\tfrac{\pi}{2n}\bigr).
\end{align*}
Note that the result is the same for even and for odd~$n$. Hence the behavior
of $\det(A_n(z))$ is dominated by the term $c\cdot\cosh(\alpha
z)\cdot\cos(\beta z)$ with $\alpha=\cot\bigl(\tfrac{\pi}{2n}\bigr)$ and
$\beta=1$, as $z$ goes to infinity. This implies that for large~$z$ the zeros
of $\det(A_n(z))$ tend to the zeros of $\cos(z)$, which is exactly what was
observed in Section~\ref{se5} for the special case $n=2$, and in
Section~\ref{se6} for the cases $n=3$ and $n=4$.

\begin{figure}
  \begin{center}
    \includegraphics[width=0.9\textwidth]{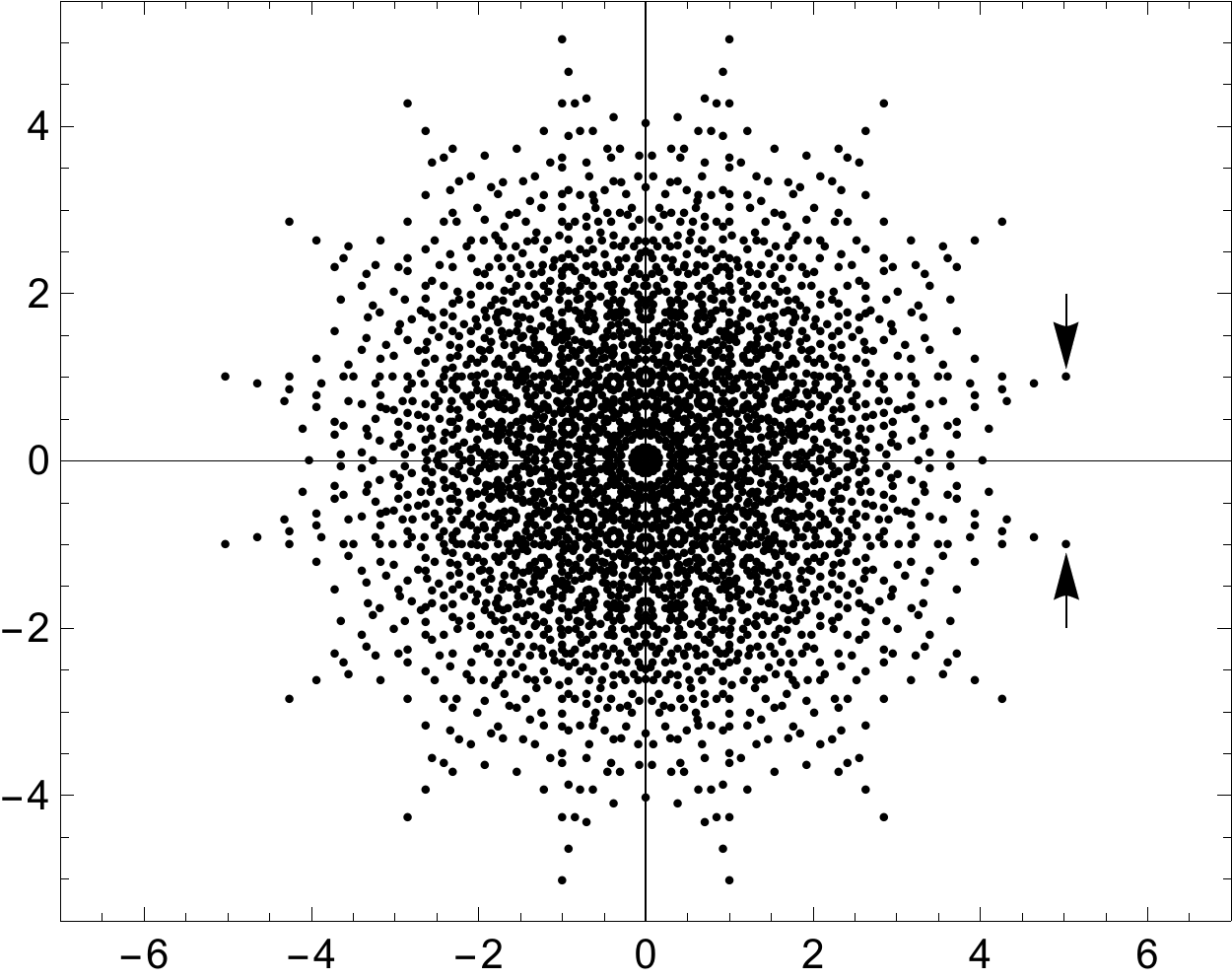}
  \end{center}
  \caption{The set \mbox{$\bigl\{\sum_{k\in I} \omega_k \mathrel{\big|}
    I\subseteq\{0,\dots,2n-1\},|I|=n\bigr\}$} plotted in the complex plane (for
    $n=8$); the two arrows point to those two numbers which dominate the
    asymptotics, i.e., with largest real part.}
  \label{fig:rootsums}
\end{figure}

\section{Conclusion}
In the paper, we have further investigated the Singular Value Decomposition of the
$n$-fold integration operators $J^n$. We have presented an algorithm that can be used to
compute the eigenvalues and eigenfunctions of $(J^n)^\ast J^n$. Out of the 5 steps of the algorithm, the first three can be done explicitely, but for the last two it seem impossible to solve them explicitely. The reason lies in the fact that the computation of the eigenvalues requires to solve a transcendental equation. However, for the cases $n=2,3,4$ we computed some of the eigenvalues as well as their eigenfunctions approximately. It needs to be mentioned that the numerical computation of the eigenvalues requires a computer program that allows computations in arbitrary precision - systems like MATLAB failed to deliver reasonable approximations to the eigenvalues.

\subsection*{Acknowledgments}
CK was supported by the Austrian Science Fund (FWF): P29467-N32 and F5011-N15.
RR was supported by the Austrian Science Fund (FWF):SFB F68-N36 and DK W1214.
BH was supported by German Research Foundation (DFG): HO 1454/12-1.

\end{document}